\documentclass[12 pt]{article}%
\usepackage{amsmath, amsfonts, amsthm, color,latexsym}
\usepackage{amsmath}
\usepackage{amsfonts}
\usepackage{amssymb}
\usepackage{color, soul}
\usepackage{graphicx}%
\usepackage{enumerate}
\usepackage[utf8]{inputenc}
\usepackage[T1]{fontenc}
\setcounter{MaxMatrixCols}{30}
\allowdisplaybreaks[4]
\providecommand{\U}[1]{\protect\rule{.1in}{.1in}}
\newtheorem{theorem}{Theorem}[section]
\newtheorem{proposition}[theorem]{Proposition}
\newtheorem{corollary}[theorem]{Corollary}
\newtheorem{example}[theorem]{Example}

\newtheorem{remark}[theorem]{Remark}

\newtheorem{lemma}[theorem]{Lemma}
\newtheorem{final remark}[theorem]{Final Remark}
\newtheorem{definition}[theorem]{Definition}
\textwidth=16.1cm
\textheight=23cm
\hoffset=-15mm
\voffset=-20mm

\begin{document}

\title{\sc Absolutely $\gamma$ - summing polynomials and the notion of coherence and compatibility}
\author{Joilson Ribeiro\thanks{joilsonor@ufba.br}~ and Fabr\'icio Santos\thanks{Supported
by a CAPES Doctoral scholarship.\thinspace \hfill\newline\indent2010 Mathematics Subject
Classification: 46B45, 47L22, 46G25.\newline\indent Key words: Banach sequence spaces, ideals of homogeneous polynomials, linear stability, finitely determined.}}
\date{}
\maketitle

\begin{abstract} In this paper, we construct the abstract ideal of polynomials. We show this is an ideal of Banach and, in a second moment, we explore the question of the coherence and compatibility of the pair composed by the abstract ideals of polynomials and multilinear applications.
\end{abstract}

\section{Introduction and background}
There is a large number of classes of operators in the literature, see for example \cite{BBDP07, C14, D03, M03, PR12}. Most of these previous works have followed a very similar script, trying to prove similar properties, of which we can highlight the following: characterize the elements of space by inequalities, build a suitable norm in the space, and then show that the normed space that has just been constructed is an ideal of Banach of multilinear operators. Some works have also explored the concept of the $n$-homogeneous polynomials by seeking the same properties found for the space of multilinear applications.

Faced with so many coincidences, the concern arose to create an abstract class of operators that could generalize as many as possible of those already existing in the literature. Thinking in this direction, D. Serrano-Rodríguez in \cite{S13} introduced the abstract class of multilinear operators $\gamma$-summing. This work shows that this class is a Banach ideal of multilinear applications. However, it should be noted that the work of abstraction is not an easy task. For example, Serrano-Rodríguez's work \cite{S13}  contained small gaps, which were filled by the work of G. Botelho and J. Campos in \cite{BC17}.

Thus, following the natural script, the proposal of this work is, in a first moment, to construct the abstract class of the $n$ -homogeneous polynomials absolutely $\gamma$-summing.

Once the ideal of multilinear applications and the ideal of homogeneous polynomials is constructed, we must consider the following fact that is well-known in the literature: although several common multi-ideals and polynomial ideals are usually associated with the some operator ideal, the extension of an operator ideal to polynomials and multilinear mapping is not always a simple task. For example, the ideal of absolutely summing operators has, at least, eight possible extensions to higher degrees (see, for example, \cite{BPR07, CP07, D03, M03CM, M03, PS11, PG05}, and references therein).

In this way, several authors have started to create tools to evaluate how good a polynomial/multilinear extension of a given operator ideal is. For example, in $\cite{BP05}$, the authors present the concept of ideal closed under differentiation (CUD) and ideal closed for scalar multiplication (CSM). One should also highlight $\cite{CDM09}$, where the concept of coherence and compatibility arose. This concept of coherence and compatibility was improved by Pellegrino and Ribeiro in $\cite{PR14}$, where it was started to work with pairs, and was composed of ideals of polynomials and ideals of multilinear operators. Lately, this has been widely used in the literature and we have now investigated whether the pair of abstract ideals of the $\gamma$-summing multilinear operators and the ideals of $n$-homogeneous $\gamma$-summing polynomials are coherent and compatible, in the sense introduced in the literature by Pellegrino and Ribeiro in $\cite{PR14}$.

We will use the letters $E,E_{1},\dots,E_{n},F,G,H$ to represent Banach spaces over the same scalar-field $\mathbb{K}=\mathbb{R}$ or $\mathbb{C}$. The closed unit ball of $E$ is denoted by $B_E$ and its topological dual by $E'$. We use BAN to denote the class of all Banach spaces over $\mathbb{K}$. Given Banach spaces $E$ and $F$, the symbol $E\overset{1}\hookrightarrow F$ means that $E$ is a linear subspace of $F$ and $\Vert x\Vert_{F}\leq \Vert x\Vert_{E}$ for every $x \in E$. By $c_{00}(E)$ we denote the set of all $E$-valued finite sequences, which, as usual, can be regarded as infinite sequences by completing with zeros. For every $j\in\mathbb{N}$, $e_j = (0,\dots, 0, 1, 0, 0,\dots)$ where $1$ appears at the $j$-th coordinate.

For each positive integer $n$, let $\mathcal{L}_{n}$ denote the class of all continuous $n$-linear operators between Banach spaces. An ideal of multilinear mappings (or multi-ideal) $\mathcal{M}$ is a subclass of the class $\mathcal{L}={\textstyle\bigcup\limits_{n=1}^{\infty}} \mathcal{L}_{n}$ of all continuous multilinear operators between Banach spaces, such that for a positive integer $n$, Banach spaces $E_{1},\ldots,E_{n}$ and $F$, the components
\[
\mathcal{M}_{n}(E_{1},\ldots,E_{n};F):=\mathcal{L}_{n}(E_{1},\ldots
,E_{n};F)\cap\mathcal{M}%
\]
satisfy:

(Ma) $\mathcal{M}_{n}(E_{1},\ldots,E_{n};F)$ is a linear subspace of $\mathcal{L}_{n}(E_{1},\ldots,E_{n};F)$, which contains the $n$-linear mappings of finite type.

(Mb) If $T\in\mathcal{M}_{n}(E_{1},\ldots,E_{n};F)$, $u_{j}\in\mathcal{L}_{1}(G_{j};E_{j})$ for $j=1,\ldots,n$ and $v\in\mathcal{L}_{1}(F;H)$, then
\[
v\circ T\circ(u_{1},\ldots,u_{n})\in\mathcal{M}_{n}(G_{1},\ldots,G_{n};H).
\]

Moreover, $\mathcal{M}$ is a (quasi-) normed multi-ideal if there is a function $\Vert\cdot\Vert_{\mathcal{M}}\colon\mathcal{M}\longrightarrow\lbrack0,\infty)$ satisfying

(M1) $\Vert\cdot\Vert_{\mathcal{M}}$ restricted to $\mathcal{M}_{n}(E_{1},\ldots,E_{n};F)$ is a (quasi-) norm, for all Banach spaces $E_{1},\ldots,E_{n}$ and $F.$

(M2) $\Vert T_{n}\colon\mathbb{K}^{n}\longrightarrow\mathbb{K}:T_{n}(\lambda_{1},\ldots,\lambda_{n})=\lambda_{1}\cdots\lambda_{n}\Vert_{\mathcal{M}}=1$ for all $n$,

(M3) If $T\in\mathcal{M}_{n}(E_{1},\ldots,E_{n};F)$, $u_{j}\in\mathcal{L}_{1}(G_{j};E_{j})$ for $j=1,\ldots,n$ and $v\in\mathcal{L}_{1}(F;H)$, then
\[
\Vert v\circ T\circ(u_{1},\ldots,u_{n})\Vert_{\mathcal{M}}\leq\Vert
v\Vert\Vert T\Vert_{\mathcal{M}}\Vert u_{1}\Vert\cdots\Vert u_{n}\Vert.
\]
When all of the components $\mathcal{M}_{n}(E_{1},\ldots,E_{n};F)$ are complete under this (quasi-) norm, $\mathcal{M}$ is called the (quasi-) Banach multi-ideal. For a fixed multi-ideal $\mathcal{M}$ and a positive integer $n$, the class
\[
\mathcal{M}_{n}:=\cup_{E_{1},...,E_{n},F}\mathcal{M}_{n}\left(  E_{1}%
,...,E_{n};F\right)
\]
is called ideal of $n$-linear mappings.

Analogously, for each positive integer $n$, we can define the polynomial ideal $\mathcal{Q}$. For more details, see \cite{PR14}.  

We will also use the definitions of {\it finitely determined} and {\it linearly stable} sequence classes, which were recently introduced in the literature by Botelho and Campos in \cite{BC17}, as follows.

\begin{definition}
	A class of vector-valued sequences $\gamma_s$, or simply a sequence class $\gamma_s$, is a rule that assigns to each $E \in BAN$ a Banach space $\gamma_s(E)$ of $E$-valued sequences;
	that is, $\gamma_s(E)$ is a vector subspace of $E^{\mathbb{N}}$ with the coordinate wise operations, such that:
	$$c_{00}(E)\subseteq \gamma_s(E)\overset{1}\hookrightarrow\ell_{\infty}(E)\mbox{ and } \Vert e_j\Vert_{\gamma_s(\mathbb{K})}=1\mbox{ for every j}.$$
\end{definition}

A sequence class $\gamma_s$ is {\it finitely determined} if for every sequence $(x_j)_{j=1}^{\infty}\in E^{\mathbb{N}}$, $(x_j)_{j=1}^{\infty}\in \gamma_s(E)$ if, and only if, $\sup_{k}\Vert (x_j)_{j=1}^{k}\Vert_{\gamma_s(E)}<+{\infty}$ and, in this case, $$\Vert (x_j)_{j=1}^{\infty}\Vert_{\gamma_s(E)}=\sup_{k}\Vert (x_j)_{j=1}^{k}\Vert_{\gamma_s(E)}.$$

\begin{definition}{}
	A sequence class $\gamma_s$ is said to be linearly stable if for every $u \in \mathcal{L}(E; F)$ it holds
	
	\begin{equation*}
	\left(u\left(x_j \right)\right)_{j=1}^{\infty} \in \gamma_s(F)
	\end{equation*}
	wherever $\left(x_j \right)_{j=1}^{\infty} \in \gamma_s(E)$ and $\|\hat{u} : \gamma_s(E) \rightarrow \gamma_s(F)\| = \|u\|$.
\end{definition}

Throughout the text, we will also use the following definition that was introduced in \cite{BC17}.

\begin{definition}
	Given sequence classes $\gamma_{s_1},\dots,\gamma_{s_n},\gamma_{s}$, we say that $\gamma_{s_1}(\mathbb{K})\cdots\gamma_{s_n}(\mathbb{K})\overset{1}\hookrightarrow\gamma_{s}(\mathbb{K})$ if $\left(\lambda_{j}^{1}\cdots\lambda_{j}^{n}\right)_{j=1}^{\infty}\in\gamma_{s}(\mathbb{K})$ and
	$$\left\Vert \left(\lambda_{j}^{1}\cdots\lambda_{j}^{n}\right)_{j=1}^{\infty}\right\Vert_{\gamma_{s}(\mathbb{K})}\le\prod_{m=1}^{n}\left\Vert\left(\lambda_{j}^{m}\right)_{j=1}^{\infty}\right\Vert_{\gamma_{s_m}(\mathbb{K})}$$
	whenever $\left(\lambda_{j}^{m}\right)_{j=1}^{\infty}\in\gamma_{s_m}(\mathbb{K})$, $m=1,\dots,n$.
\end{definition}

So, the main goal of this note is to construct an abstract space of $n$-homogeneous polynomials and show that it is an ideal of Banach polynomials.

Since the abstract ideal of multilinear applications \cite{S13} and the abstract ideal of polynomials are now known, we will start to study the coherence and compatibility of the pair $\left(\mathcal{Q},\mathcal{M}\right)$ in the sense of Pellegrino and Ribeiro \cite{PR14}, where $\mathcal{Q}$ is the ideal of polynomials and $\mathcal{M}$ is the ideal of multilinear applications.

\section{Absolutely $\gamma$ - summing polynomials}\label{PGS}

For this study, we will consider sequence class $\gamma_s, \gamma_{s_1},..., \gamma_{s_m}$ to be finitely determined and linearly stable, as defined in \cite{BC17}.


\begin{definition}
Let $E$ and $F$ be Banach spaces. An $m$-homogeneous polynomial $P : E  \longrightarrow F$ is said to be $\gamma_{s, s_1}$ - summing at $a\in E$, if

\begin{equation*}
\left(P(a + x_j) - P(a) \right)_{j=1}^{\infty} \in \gamma_s(F)
\end{equation*}
whenever $\left(x_j  \right)_{j=1}^{\infty} \in \gamma_{s_1}(E)$.
\end{definition}

\begin{remark}
This definition is inspired by the definition of the absolutely $(p,q)$-summing $m$-homogeneous polynomials.

\end{remark}

The space of all $m$-homogeneous polynomials $\gamma_{s, s_1}$ - summing at $a$, as denoted by $\mathcal{P}_{\gamma_{s, s_1}}^{(a)}\left(^mE; F \right)$, is a linear subspace of the $\mathcal{P}\left(^mE; F \right)$. When $a=0$, we write only $\mathcal{P}_{\gamma_{s, s_1}}\left(^mE; F \right)$. The space of all $m$-homogeneous polynomials $\gamma_{s, s_1}$-summing at every point will be denoted by $\mathcal{P}_{\gamma_{s, s_1}}^{(ev)}\left(^mE; F \right)$.

By using the polarization formula \cite[Corollary 1.6]{DJT95} we can easily prove the following result:

\begin{proposition}\label{P1.1.}
$P \in \mathcal{P}_{\gamma_{s, s_1}}^{ev}\left(^mE; F \right)$  if, and only if, $\check{P}$ is $\gamma_{s, s_1}$ - summing in every point $(a_1,\dots, a_m) \in E\times \overset{m}{\cdots} \times E$. 
\end{proposition}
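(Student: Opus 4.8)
The plan is to prove the two implications separately, using the multilinear expansion of $P$ in terms of its associated symmetric $m$-linear map $\check{P}$ for the ``if'' part, and the polarization formula \cite[Corollary 1.6]{DJT95} for the ``only if'' part. The only structural facts about the sequence classes that will be needed are that each $\gamma_{s_1}(E)$ and $\gamma_s(F)$ is a vector space and that the null sequence (indeed, any element of $c_{00}(E)$) lies in $\gamma_{s_1}(E)$; in particular, neither completeness, nor finite determination, nor linear stability plays a role in this statement.

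For the implication $(\Leftarrow)$, assume $\check{P}$ is $\gamma_{s,s_1}$-summing at every point of $E^m$, and fix $a\in E$ together with $(x_j)_{j=1}^{\infty}\in\gamma_{s_1}(E)$. Expanding $P(a+x_j)=\check{P}(a+x_j,\dots,a+x_j)$ by multilinearity and symmetry gives
\[
P(a+x_j)-P(a)=\sum_{k=1}^{m}\binom{m}{k}\,\check{P}\big(\underbrace{x_j,\dots,x_j}_{k},\underbrace{a,\dots,a}_{m-k}\big).
\]
For each $k\in\{1,\dots,m\}$ I would apply the hypothesis at the point $(\underbrace{0,\dots,0}_{k},\underbrace{a,\dots,a}_{m-k})\in E^m$, feeding the sequence $(x_j)_j\in\gamma_{s_1}(E)$ into the first $k$ slots and the null sequence (which lies in $\gamma_{s_1}(E)$ since $c_{00}(E)\subseteq\gamma_{s_1}(E)$) into the remaining ones; as $\check{P}$ vanishes whenever a null vector is inserted in one of its entries, this yields $\big(\check{P}(x_j,\dots,x_j,a,\dots,a)\big)_j\in\gamma_s(F)$. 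Since $\gamma_s(F)$ is a vector space, the finite linear combination above belongs to $\gamma_s(F)$, so $P$ is $\gamma_{s,s_1}$-summing at $a$; as $a$ was arbitrary, $P\in\mathcal{P}_{\gamma_{s,s_1}}^{ev}({}^mE;F)$.

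For the implication $(\Rightarrow)$, assume $P\in\mathcal{P}_{\gamma_{s,s_1}}^{ev}({}^mE;F)$ and fix $(a_1,\dots,a_m)\in E^m$ and sequences $(x_j^k)_j\in\gamma_{s_1}(E)$ for $k=1,\dots,m$. Writing the polarization formula for the vectors $a_k+x_j^k$ and for the vectors $a_k$ and subtracting gives
\[
\check{P}(a_1+x_j^1,\dots,a_m+x_j^m)-\check{P}(a_1,\dots,a_m)=\frac{1}{m!\,2^m}\sum_{\varepsilon}\varepsilon_1\cdots\varepsilon_m\Big[P\big(a^{\varepsilon}+\textstyle\sum_{k=1}^{m}\varepsilon_k x_j^k\big)-P(a^{\varepsilon})\Big],
\]
where the sum runs over all $\varepsilon=(\varepsilon_1,\dots,\varepsilon_m)\in\{-1,1\}^m$ and $a^{\varepsilon}:=\sum_{k=1}^{m}\varepsilon_k a_k\in E$. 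For each fixed $\varepsilon$, the sequence $\big(\sum_{k=1}^{m}\varepsilon_k x_j^k\big)_j$ again lies in $\gamma_{s_1}(E)$ by the vector space structure, so, since $P$ is $\gamma_{s,s_1}$-summing at $a^{\varepsilon}$, the sequence $\big(P(a^{\varepsilon}+\sum_k\varepsilon_k x_j^k)-P(a^{\varepsilon})\big)_j$ belongs to $\gamma_s(F)$. Using once more that $\gamma_s(F)$ is a vector space, the right-hand side above is a sequence in $\gamma_s(F)$, which shows that $\check{P}$ is $\gamma_{s,s_1}$-summing at $(a_1,\dots,a_m)$; as this point was arbitrary, the proof is complete. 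The whole argument is essentially bookkeeping, and the only step that requires a little care is, in the $(\Leftarrow)$ direction, identifying the correct evaluation points $(0,\dots,0,a,\dots,a)$ for $\check{P}$ and justifying the insertion of the null sequence in the trailing slots.
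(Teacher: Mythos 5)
Your proof is correct and follows exactly the route the paper indicates (it only asserts the result ``by using the polarization formula''): the binomial/multilinear expansion of $P(a+x_j)-P(a)$ with null sequences in the trailing slots for the ``if'' direction, and the polarization identity applied at the shifted and unshifted points for the ``only if'' direction, together with the vector-space structure of $\gamma_{s_1}(E)$ and $\gamma_s(F)$. This is the same argument the authors reuse explicitly later in the proof of Proposition \ref{P.1.3.}, so nothing further is needed.
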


The next result will be used to construct a standard norm at the space of the $m$-homogeneous polynomials $\gamma_{s, s_1}$-summing at the origin, $\mathcal{P}_{\gamma_{s, s_1}}\left(^mE; F \right)$.

\begin{proposition}\label{CaracterizacaoOrigem}
$P \in \mathcal{P}_{\gamma_{s, s_1}}(^mE; F)$ if, and only if, there is a constant $C > 0$, such that

\begin{equation}\label{E1.4}
\left\Vert\left(P( x_j)   \right)_{j=1}^{\infty} \right\Vert_{\gamma_s(F)} \le C\left\Vert \left(x_j  \right)_{j=1}^{\infty}  \right\Vert_{\gamma_{s_1}(E)}^m
\end{equation}
whenever $\left(x_j  \right)_{j=1}^{\infty} \in \gamma_{s_1}(E)$. In addition, the infimum of the constants $C > 0$ satisfying inequality \eqref{E1.4} defines a norm in $ \mathcal{P}_{\gamma_{s, s_1}}(^mE; F)$, as denoted by $\pi(\cdot )$.
\end{proposition}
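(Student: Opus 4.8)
The plan is to prove the stated equivalence (the converse is immediate, the direct implication carries all the weight) and then to check that $\pi(\cdot)$ is a norm, which is routine once the equivalence is in hand. For the direct implication, assume $P$ is $\gamma_{s,s_1}$-summing at the origin, so the (non-linear) map $\Psi\colon\gamma_{s_1}(E)\to\gamma_{s}(F)$, $\Psi\big((x_j)_j\big)=\big(P(x_j)\big)_j$, is well defined. I would approximate $\Psi$ by its truncations $\Psi_k\big((x_j)_j\big)=\big(P(x_1),\dots,P(x_k),0,0,\dots\big)$. Writing $\rho_i\colon\gamma_{s_1}(E)\to E$ for the $i$-th coordinate projection (continuous with $\|\rho_i\|\le1$ because $\gamma_{s_1}(E)\overset{1}{\hookrightarrow}\ell_\infty(E)$) and $\iota_i\colon F\to\gamma_{s}(F)$, $\iota_i(v)=v\,e_i$ (continuous with $\|\iota_i\|\le1$ by linear stability applied to $\lambda\mapsto\lambda v$, together with $\|e_i\|_{\gamma_s(\mathbb{K})}=1$), one has $\Psi_k=\sum_{i=1}^{k}\iota_i\circ P\circ\rho_i$; hence each $\Psi_k$ is a continuous $m$-homogeneous polynomial from $\gamma_{s_1}(E)$ into $\gamma_s(F)$, with $\|\Psi_k\|\le k\,\|P\|$.

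Since $\gamma_s$ is finitely determined, for each $(x_j)_j\in\gamma_{s_1}(E)$ the sequence $\Psi_k\big((x_j)_j\big)$ is the $k$-th truncation of $\big(P(x_j)\big)_j\in\gamma_s(F)$, so
\[
\sup_{k}\big\|\Psi_k\big((x_j)_j\big)\big\|_{\gamma_s(F)}=\big\|\big(P(x_j)\big)_j\big\|_{\gamma_s(F)}<+\infty .
\]
Thus $(\Psi_k)_k$ is a pointwise bounded family of continuous $m$-homogeneous polynomials between the Banach spaces $\gamma_{s_1}(E)$ and $\gamma_s(F)$, and the uniform boundedness principle for homogeneous polynomials yields $C:=\sup_k\|\Psi_k\|<+\infty$. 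Using finite determination again,
\[
\big\|\big(P(x_j)\big)_j\big\|_{\gamma_s(F)}=\sup_{k}\big\|\Psi_k\big((x_j)_j\big)\big\|_{\gamma_s(F)}\le C\,\big\|(x_j)_j\big\|_{\gamma_{s_1}(E)}^{m},
\]
which is \eqref{E1.4}. The converse is trivial: \eqref{E1.4} already presupposes $\big(P(x_j)\big)_j\in\gamma_s(F)$ whenever $(x_j)_j\in\gamma_{s_1}(E)$, i.e. $P\in\mathcal{P}_{\gamma_{s,s_1}}(^mE;F)$.

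For the norm, put $\pi(P)=\inf\{C>0:\text{\eqref{E1.4} holds for }P\}$; by the equivalence just proved this infimum is over a nonempty set, hence finite, for every $P\in\mathcal{P}_{\gamma_{s,s_1}}(^mE;F)$. Passing \eqref{E1.4} (for an arbitrary admissible $C$) to each truncation and then taking the supremum over $k$ via finite determination shows that $C=\pi(P)$ is itself admissible, so the infimum is attained. The identities $(\lambda P)(x)=\lambda P(x)$ and $(P+Q)(x)=P(x)+Q(x)$, together with the norm axioms in $\gamma_s(F)$, give $\pi(\lambda P)=|\lambda|\,\pi(P)$ and $\pi(P+Q)\le\pi(P)+\pi(Q)$. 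Finally, if $\pi(P)=0$ then $\big\|\big(P(x_j)\big)_j\big\|_{\gamma_s(F)}=0$ for all $(x_j)_j\in\gamma_{s_1}(E)$; choosing $(x,0,0,\dots)\in c_{00}(E)\subseteq\gamma_{s_1}(E)$ and invoking $\gamma_s(F)\overset{1}{\hookrightarrow}\ell_\infty(F)$ gives $\|P(x)\|_F\le\big\|\big(P(x),0,0,\dots\big)\big\|_{\gamma_s(F)}=0$ for every $x\in E$, so $P=0$.

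The single genuinely non-routine point is the passage in the second paragraph: one must see that the truncations $\Psi_k$ are honest continuous homogeneous polynomials on the Banach space $\gamma_{s_1}(E)$, and that finite determination of $\gamma_s$ is exactly what lets a uniform bound on the $\Psi_k$ upgrade to \eqref{E1.4} for the full sequence. If one wishes to avoid quoting the polynomial uniform boundedness principle, the same estimate can be extracted from a Baire category argument on the closed sets $A_n=\{(x_j)_j:\sup_k\|\Psi_k((x_j)_j)\|_{\gamma_s(F)}\le n\}$ combined with the standard fact that an $m$-homogeneous polynomial bounded on some ball is bounded on a ball about the origin.
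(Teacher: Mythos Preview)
Your proof is correct, and it takes a genuinely different route from the paper's. The paper argues the forward implication by passing to the symmetric multilinear map: it invokes Proposition~\ref{P1.1.} (really, the special case at the origin) to conclude that $\check P$ is $\gamma_{s,s_1}$-summing at $0$ as a multilinear operator, and then quotes \cite[Proposition~2]{S13}, where the analogous inequality has already been established for multilinear maps; since $P(x)=\check P(x)^m$, the polynomial inequality follows. The paper thus outsources the Baire/UBP step to the multilinear literature.

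You, by contrast, stay entirely on the polynomial side and give a self-contained argument: you exhibit the induced map $\Psi$ as a supremum of the continuous $m$-homogeneous truncations $\Psi_k=\sum_{i\le k}\iota_i\circ P\circ\rho_i$, use finite determination of $\gamma_s$ to turn well-definedness of $\Psi$ into pointwise boundedness of $(\Psi_k)_k$, and then apply the polynomial uniform boundedness principle (or the Baire alternative you sketch) to obtain the constant $C$. This buys you independence from \cite{S13} and from the polarization formula, at the modest cost of verifying that $\iota_i$ and $\rho_i$ are contractions (which you do correctly from linear stability and the embedding $\gamma_{s_1}(E)\overset{1}{\hookrightarrow}\ell_\infty(E)$). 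The paper's approach is shorter on the page only because the work has been pushed into the cited multilinear result; structurally, your direct argument is the cleaner one for a polynomial statement.
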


\begin{proof} 
Suppose $P \in \mathcal{P}_{\gamma_{s, s_1}}(^mE; F)$. From Proposition \ref{P1.1.}, it follows that $\check{P}$ is absolutely $\gamma_{s, s_1}$ - summing at the origin. By \cite[Proposition $2$]{S13}, exists $C > 0$, such that
\begin{equation*}
\left\|\left(\check{P}\left(x_j \right)^m \right)_{j=1}^{\infty} \right\|_{\gamma_s(F)} \le C\left\|\left(x_j \right)_{j=1}^{\infty} \right\|_{\gamma_{s_1}(E)}^m.
\end{equation*}

So,
\begin{equation*}
\left\|\left(P(x_j) \right)_{j=1}^{\infty} \right\|_{\gamma_s(F)} \le C\left\|\left(x_j \right)_{j=1}^{\infty} \right\|_{\gamma_{s_1}(E)}^m.
\end{equation*}

Given that $\gamma_{s}$ and $\gamma_{s_1}$ are finitely determined, the reciprocal is immediate. It is easy to see that $\pi(\cdot )$ define a norm in $ \mathcal{P}_{\gamma_{s, s_1}}(^mE; F)$.
\end{proof}

The following lemma, whose proof can be obtained following Proposition \ref{P1.1.} and \cite[Lemma 2]{BBJP06}, is crucial for the proof of the main result of this section:

\begin{lemma}\label{L1.1.}
If $P \in \mathcal{P}_{\gamma_{s, s_1}}(^mE; F)$ and $a \in E$, then there is a constant $C_a > 0$, such that
\begin{equation*}
\left\Vert\left(P(a + x_j) - P(a) \right)_{j=1}^{\infty}  \right\Vert_{\gamma_s(F)} \le C_a,
\end{equation*}
for all $\left(x_j  \right)_{j=1}^{\infty} \in \gamma_{s_1}(E)$ and $\left\Vert\left(x_j  \right)_{j=1}^{\infty}  \right\Vert_{\gamma_{s_1}(E)} \le 1$.
\end{lemma}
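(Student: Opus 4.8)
The plan is to reduce the statement to a uniform bound coming from the polynomial expansion of $P(a+x_j)-P(a)$ in terms of the associated symmetric $m$-linear map $\check P$. Writing the binomial-type expansion of a homogeneous polynomial shifted by $a$, we have
\begin{equation*}
P(a+x_j)-P(a)=\sum_{k=1}^{m}\binom{m}{k}\check P\bigl(a^{m-k},x_j^{\,k}\bigr),
\end{equation*}
so that $(P(a+x_j)-P(a))_{j=1}^{\infty}$ is a finite sum of sequences of the form $\bigl(\check P(a^{m-k},x_j^{\,k})\bigr)_{j=1}^{\infty}$, $k=1,\dots,m$. Since $\gamma_s(F)$ is a vector space and a Banach space, it suffices to bound each of these sequences in $\gamma_s(F)$ uniformly over $\|(x_j)_{j}\|_{\gamma_{s_1}(E)}\le 1$; the constant $C_a$ is then the (finite) sum of the binomial coefficients times these bounds.

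First I would fix $k\in\{1,\dots,m\}$ and consider the map
\begin{equation*}
T_k\colon E^k\longrightarrow F,\qquad T_k(y_1,\dots,y_k)=\check P(a^{m-k},y_1,\dots,y_k),
\end{equation*}
which is a continuous $k$-linear operator. I claim it is absolutely $\gamma_{s,s_1}$-summing at the origin (in the multilinear sense of \cite{S13}), i.e.\ there is $D_k>0$ with
\begin{equation*}
\Bigl\|\bigl(T_k(x_j,\dots,x_j)\bigr)_{j=1}^{\infty}\Bigr\|_{\gamma_s(F)}\le D_k\,\Bigl\|(x_j)_{j=1}^{\infty}\Bigr\|_{\gamma_{s_1}(E)}^{k}.
\end{equation*}
This follows from Proposition \ref{CaracterizacaoOrigem} applied to $P\in\mathcal P_{\gamma_{s,s_1}}(^mE;F)$ together with \cite[Proposition 2]{S13}: the hypothesis gives that $\check P$ is $\gamma_{s,s_1}$-summing at the origin as an $m$-linear map, and fixing $m-k$ of its entries equal to $a$ yields, after the standard multilinear argument (mirroring \cite[Lemma 2]{BBJP06} and using Proposition \ref{P1.1.}), that the ``lower-degree'' $k$-linear map $T_k$ inherits a summing inequality with constant $D_k$ depending on $\|a\|$ and $\pi(P)$. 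Then, for $\|(x_j)_j\|_{\gamma_{s_1}(E)}\le 1$ we get $\|(T_k(x_j,\dots,x_j))_j\|_{\gamma_s(F)}\le D_k$.

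Combining, I would set $C_a:=\sum_{k=1}^{m}\binom{m}{k}D_k$ and conclude, using the triangle inequality in the Banach space $\gamma_s(F)$, that
\begin{equation*}
\Bigl\|\bigl(P(a+x_j)-P(a)\bigr)_{j=1}^{\infty}\Bigr\|_{\gamma_s(F)}\le\sum_{k=1}^{m}\binom{m}{k}\Bigl\|\bigl(T_k(x_j,\dots,x_j)\bigr)_{j=1}^{\infty}\Bigr\|_{\gamma_s(F)}\le C_a
\end{equation*}
for every $(x_j)_{j=1}^{\infty}\in\gamma_{s_1}(E)$ with $\|(x_j)_{j=1}^{\infty}\|_{\gamma_{s_1}(E)}\le 1$. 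One subtlety is that the finitely-determined hypothesis on $\gamma_s$ guarantees that the resulting infinite sequences really belong to $\gamma_s(F)$ (one argues on finite truncations and passes to the supremum), so the expansion and the triangle inequality are legitimate at the level of $\gamma_s(F)$ rather than merely coordinatewise.

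The main obstacle I anticipate is step two: carefully justifying that fixing $m-k$ coordinates of $\check P$ equal to $a$ produces a $k$-linear map that is again $\gamma_{s,s_1}$-summing at the origin with an explicit constant controlled by $\|a\|^{m-k}$ and $\pi(P)$. This is exactly the content hidden behind the phrase ``following Proposition \ref{P1.1.} and \cite[Lemma 2]{BBJP06}'': it requires replaying, in the abstract $\gamma$-summing framework, the device from \cite{BBJP06} of testing on sequences that interleave the fixed vector $a$ with the variable vectors $x_j$, so that the summing inequality for the full $m$-linear map $\check P$ specializes to the lower-degree map. Everything else—the polynomial expansion, vector-space and completeness properties of $\gamma_s(F)$, and the final triangle-inequality estimate—is routine.
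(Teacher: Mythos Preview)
Your proposal is correct and follows essentially the same route the paper indicates. The paper gives no explicit proof of this lemma; it merely points to Proposition~\ref{P1.1.} (the polarization link between $P$ and $\check P$) and \cite[Lemma~2]{BBJP06}, which is precisely the machinery you invoke. Your outline---binomial expansion of $P(a+x_j)-P(a)$, reduction to bounding each $(\check P(a^{m-k},x_j^{\,k}))_j$, and deferring the nontrivial ``fixing $m-k$ arguments preserves origin-summing'' step to the device from \cite{BBJP06}---matches the paper's intended approach, and your identification of that step as the main obstacle is accurate.
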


The next result, as in the case of Proposition \ref{CaracterizacaoOrigem}, is a characterization by inequality of the operators in  $\mathcal{P}_{\gamma_{s, s_1}}^{ev}(^mE; F)$. The same is very important because from it we can extract a norm that makes $\mathcal{P}_{\gamma_{s, s_1}}^{ev}(^mE; F)$ a Banach space. The proof was inspired on \cite{BBDP07} and \cite{M03}. 

\begin{theorem}\label{T1.2.}
Let $P \in \mathcal{P}(^mE; F)$. The following assertions are equivalents:

\begin{enumerate}[$(a)$]
\item $P \in \mathcal{P}_{\gamma_{s, s_1}}^{(ev)}(^mE; F)$;

\item There is $C > 0$ satisfying
\begin{equation*}
\left\Vert \left(P(b + x_j) - P(b)  \right)_{j=1}^{n}  \right\Vert_{\gamma_s(F)} \le C \left( \|b\| + \left\Vert\left(x_j  \right)_{j=1}^{n}  \right\Vert_{\gamma_{s_1}(E)} \right)^m
\end{equation*}
for all $n \in \mathbb{N}$ and $x_1,\cdots, x_m, a \in E$.

\item There is $C > 0$ satisfying
\begin{equation}\label{EE1.30}
\left\Vert \left(P(b + x_j) - P(b)  \right)_{j=1}^{\infty}  \right\Vert_{\gamma_s(F)} \le C \left( \|b\| + \left\Vert\left(x_j  \right)_{j=1}^{\infty}  \right\Vert_{\gamma_{s_1}(E)} \right)^m
\end{equation}
for all $b \in E$ and $\left(x_j  \right)_{j=1}^{\infty} \in \gamma_{s_1}(E)$.
\end{enumerate}
\end{theorem}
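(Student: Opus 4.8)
The plan is to prove the cycle of implications $(c)\Rightarrow(b)\Rightarrow(a)\Rightarrow(c)$, or more conveniently $(a)\Rightarrow(c)\Rightarrow(b)\Rightarrow(a)$, since $(c)\Rightarrow(b)$ is trivial (restrict an infinite-sequence estimate to its first $n$ terms, using that $\gamma_{s_1}$ is finitely determined so the norm of a truncation does not exceed the norm of the whole sequence), and $(b)\Rightarrow(c)$ is equally immediate by taking the supremum over $n$ and invoking that both $\gamma_s$ and $\gamma_{s_1}$ are finitely determined. So the real content is $(a)\Rightarrow(c)$, i.e.\ promoting the mere membership statement in the definition of $\mathcal{P}_{\gamma_{s,s_1}}^{(ev)}$ to a uniform quantitative inequality with the homogeneous majorant $(\|b\|+\|(x_j)\|_{\gamma_{s_1}(E)})^m$.

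For $(a)\Rightarrow(c)$ I would first reduce to a bounded statement via a closed-graph / uniform-boundedness argument, which is exactly the role Lemma~\ref{L1.1.} plays: for each fixed $a\in E$ the map sending $(x_j)_j\in\gamma_{s_1}(E)$ to $(P(a+x_j)-P(a))_j\in\gamma_s(F)$ has closed graph (membership in $\gamma_s(F)$ together with finite determinacy gives coordinatewise control), hence is bounded on the unit ball, giving a constant $C_a$. The key step is then to show one can choose $C_a$ to grow like $(1+\|a\|)^m$ uniformly in $a$; equivalently, it suffices to get a single constant $C$ valid for all $a$ with $\|a\|\le 1$ and all $(x_j)_j$ with $\|(x_j)_j\|_{\gamma_{s_1}(E)}\le 1$, and then rescale. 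To get this single uniform constant I would again use a closed-graph argument, this time on the map $(a,(x_j)_j)\mapsto(P(a+x_j)-P(a))_j$ from $E\times\gamma_{s_1}(E)$ (a Banach space) into $\gamma_s(F)$: $(a)$ says it is everywhere defined, each coordinate is continuous in $(a,(x_j)_j)$ because $P$ is continuous, and finite determinacy of $\gamma_s$ upgrades coordinatewise convergence plus norm-boundedness to convergence in $\gamma_s(F)$, so the graph is closed and the map is bounded on bounded sets.

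With the uniform constant $C_0$ in hand (so $\|(P(a+x_j)-P(a))_j\|_{\gamma_s(F)}\le C_0$ whenever $\|a\|\le 1$ and $\|(x_j)_j\|_{\gamma_{s_1}(E)}\le 1$), I would finish by a homogeneity/scaling argument. Given arbitrary $b\in E$ and $(x_j)_j\in\gamma_{s_1}(E)$, set $t=\|b\|+\|(x_j)_j\|_{\gamma_{s_1}(E)}$ (the case $t=0$ being trivial), and write $b=t\,a$, $x_j=t\,y_j$ with $\|a\|\le 1$ and $\|(y_j)_j\|_{\gamma_{s_1}(E)}\le 1$ by linear stability of $\gamma_{s_1}$. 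Since $P$ is $m$-homogeneous,
\begin{equation*}
P(b+x_j)-P(b)=P(t(a+y_j))-P(ta)=t^m\big(P(a+y_j)-P(a)\big),
\end{equation*}
so, again using linear stability (the scalar-multiplication map $\gamma_s(F)\to\gamma_s(F)$ has norm $t^m$),
\begin{equation*}
\left\Vert\left(P(b+x_j)-P(b)\right)_{j=1}^{\infty}\right\Vert_{\gamma_s(F)}=t^m\left\Vert\left(P(a+y_j)-P(a)\right)_{j=1}^{\infty}\right\Vert_{\gamma_s(F)}\le C_0\,t^m,
\end{equation*}
which is precisely \eqref{EE1.30} with $C=C_0$. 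The main obstacle I anticipate is the uniform closed-graph step: one must be careful that finite determinacy is genuinely what lets a sequence converging coordinatewise and bounded in $\gamma_s$-norm actually converge in $\gamma_s(F)$ (or at least lie in it with controlled norm), and that the domain $E\times\gamma_{s_1}(E)$ is taken with a norm making it Banach; everything else is routine once those technical points are pinned down, and the overall strategy mirrors the multilinear treatment in \cite{BBDP07, M03} referenced by the authors.
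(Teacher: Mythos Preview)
Your overall architecture matches the paper's: both introduce the map $\eta_{\gamma_{s,s_1}}(P)\colon G:=E\times\gamma_{s_1}(E)\to\gamma_s(F)$, $(b,(x_j)_j)\mapsto(P(b+x_j)-P(b))_j$, argue it is bounded on $B_G$, and then recover \eqref{EE1.30}. Your scaling step at the end is exactly the observation that $\eta_{\gamma_{s,s_1}}(P)$ is an $m$-homogeneous polynomial on $G$, which the paper states explicitly.

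The substantive gap is in your ``uniform closed-graph step''. The closed graph theorem you are invoking is for \emph{linear} operators between Banach (or $F$-) spaces; the map $\eta_{\gamma_{s,s_1}}(P)$ is not linear but $m$-homogeneous, so ``graph closed $\Rightarrow$ bounded on bounded sets'' is not available off the shelf. The paper does not appeal to closed graph here. Instead it runs a Baire category argument: for each $k\in\mathbb{N}$ and each $(x_j)_j\in B_{\gamma_{s_1}(E)}$ the set $F_{k,(x_j)}=\{b\in E:\|\eta_{\gamma_{s,s_1}}(P)(b,(x_j)_j)\|_{\gamma_s(F)}\le k\}$ is closed (here finite determinacy is used to write it as a countable intersection of preimages of $[0,k]$ under continuous finite-truncation maps), so each $F_k=\bigcap_{(x_j)\in B_{\gamma_{s_1}(E)}}F_{k,(x_j)}$ is closed; Lemma~\ref{L1.1.} gives $E=\bigcup_k F_k$; Baire then yields some $F_{k_0}$ with nonempty interior, and from there continuity of the polynomial $\eta_{\gamma_{s,s_1}}(P)$ follows as in \cite[Proposition~9.3]{BBJP06} or \cite[Theorem~4.1]{BBDP07}. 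This is precisely the ``polynomial closed graph'' mechanism you are gesturing at, but it has to be carried out explicitly via Baire; simply saying the graph is closed does not finish the argument.

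A smaller point: your claim that ``finite determinacy of $\gamma_s$ upgrades coordinatewise convergence plus norm-boundedness to convergence in $\gamma_s(F)$'' is not correct (take $\gamma_s=\ell_\infty$ and $e_n\to 0$ coordinatewise). Fortunately you do not need it: to see the graph is closed it suffices that $\gamma_s(F)\overset{1}{\hookrightarrow}\ell_\infty(F)$, so norm convergence in $\gamma_s(F)$ implies coordinatewise convergence, and continuity of $P$ identifies the limit. But again, closedness of the graph alone is not the right tool here; the paper's Baire argument is what actually delivers the bound.
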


\begin{proof}
$(c) \Rightarrow (a)$ and $(c) \Rightarrow (b)$ are immediate. Using the fact that the sequence classes considered are finitely determined, it immediately follows that $(b) \Rightarrow (c)$.

Therefore, it remains to prove that $(a) \Rightarrow (c)$.

Let $G = E \times \gamma_{s_1}(E)$. For each $P \in \mathcal{P}_{\gamma_{s, s_1}}^{(ev)}(^mE; F)$, set the following application
\begin{equation*}
\eta_{\gamma_{s, s_1}}(P) : G \longrightarrow \gamma_s(F)
\end{equation*}
given by
\begin{equation*}
\eta_{\gamma_{s, s_1}}(P)\left(\left(b, \left(x_j  \right)_{j=1}^{\infty}  \right)  \right) = \left(P(b +  x_j) - P(b)  \right)_{j=1}^{\infty}.
\end{equation*}

It is not difficult to see that $\eta_{\gamma_{s, s_1}}(P)$ is an $m$-homogeneous polynomial. To show that $\eta_{\gamma_{s, s_1}}(P)$ is continuous, we will consider, for all $k \in \mathbb{N}$ and $\left(x_j  \right)_{j=1}^{\infty} \in \gamma_{s_1}(F)$, the set
\begin{equation*}
F_{k, \left(x_j  \right)_{j=1}^{\infty}} = \left\{b \in E : \left\Vert\eta_{\gamma_{s, s_1}}(P)\left(\left(b, \left(x_j  \right)_{j=1}^{\infty}  \right)  \right)   \right\Vert_{\gamma_s(F)} \le k \right\}.
\end{equation*}
Note that the set $F_{k, \left(x_j  \right)_{j=1}^{\infty}}$ is closed for all $b \in E$ and $\left(x_j  \right)_{j=1}^{\infty} \in B_{\gamma_{s_1}(F)}$. Indeed, for each $n \in \mathbb{N}$, let
\begin{equation*}
F_{k, \left(x_j  \right)_{j=1}^{n}} = \left\{b \in E : \left\Vert\eta_{\gamma_{s, s_1}}(P)\left(\left(b, \left(x_j  \right)_{j=1}^{n}  \right)  \right)   \right\Vert_{\gamma_s(F)} \le k \right\}.
\end{equation*}
So,
\begin{equation}\label{E1.33}
F_{k, \left(x_j  \right)_{j=1}^{\infty}} = \bigcap_{n \in \mathbb{N}}F_{k, \left(x_j  \right)_{j=1}^{n}}.
\end{equation}
For each $\left(x_j  \right)_{j=1}^{\infty} \in B_{\gamma_{s_1}(E)}$, and fixed $k \in \mathbb{N}$, we can define

\begin{equation*}
D_k : E \longrightarrow [0, \infty)
\end{equation*}
given by

\begin{equation*}
D_k(b) = \left\Vert\left(P(b + x_j) - P(b)  \right)_{j=1}^{n}  \right\Vert_{\gamma_s(F)}.
\end{equation*}
It is clear that $D_k$ is a continuous application. So, each $F_{k, \left(x_j  \right)_{j=1}^{n}}$ is closed because

\begin{equation*}
F_{k, \left(x_j  \right)_{j=1}^{n}} = D_k^{-1}([0, k]).
\end{equation*}
Therefore, from \eqref{E1.33} it follows that $F_{k, \left(x_j  \right)_{j=1}^{\infty}}$ is closed because it is the intersection of closed sets.

Let
\begin{equation*}
F_k = \bigcap_{\left(x_j  \right)_{j=1}^{\infty} \in B_{\gamma_{s_1}^u(E)}}F_{k, \left(x_j  \right)_{j=1}^{\infty}}.
\end{equation*}
By the Lemma \eqref{L1.1.} it follows that
\begin{equation*}
E = \bigcup_{k \in \mathbb{N}}F_k.
\end{equation*}
Using the Baire Category Theorem, we know that there is a constant $k_0 \in \mathbb{N}$ such that $F_{k_0}$ has an interior point. The continuity of the application $\eta_{\gamma_{s, s_1}}(P)$ is obtained by repeating the proof of \cite[Proposition 9.3]{BBJP06} (or \cite[Theorem 4.1]{BBDP07}). Therefore,

\begin{align}\label{EE1.38}
\left\Vert \left(P(b + x_j) - P(b)  \right)_{j=1}^{\infty}  \right\Vert_{\gamma_s(F)} &= \left\Vert\eta_{\gamma_{s, s_1}}(P)\left(\left(b, \left(x_j \right)_{j=1}^{\infty}  \right)  \right)  \right\Vert_{\gamma_s(F)}\\
&\le \left\Vert\eta_{\gamma_{s, s_1}}(P)\right\Vert\left( ||b|| + \left\Vert\left(x_j  \right)_{j=1}^{\infty}  \right\Vert_{\gamma_{s_1}(E)} \right)^m  .\nonumber
\end{align}
\end{proof}

By straightforward computations, we can get the following result.

\begin{corollary}\label{C1.1.}
The infimum of the constants $C > 0$ that satisfy the inequality \eqref{EE1.30} defines a norm in $\mathcal{P}_{\gamma_{s, s_1}}^{(ev)}(^mE; F)$, that will be denoted by $\pi^{(ev)}(\cdot)$. 
\end{corollary}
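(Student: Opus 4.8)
The plan is to verify directly that $\pi^{(ev)}(P) := \inf\{C > 0 : C \text{ satisfies } \eqref{EE1.30}\}$ is well-defined, finite, and satisfies the norm axioms on $\mathcal{P}_{\gamma_{s,s_1}}^{(ev)}(^mE;F)$. First I would note that the set of admissible constants is nonempty by Theorem \ref{T1.2.}, part $(a)\Rightarrow(c)$, so the infimum exists as a nonnegative real number; in fact the proof of that implication shows $\pi^{(ev)}(P) \le \Vert\eta_{\gamma_{s,s_1}}(P)\Vert < \infty$. It is also immediate from \eqref{EE1.30} (taking $b=0$) that $\pi^{(ev)}(P) \ge \pi(P)$, which will give positive-definiteness: if $\pi^{(ev)}(P) = 0$ then $\pi(P) = 0$, and since $\pi$ is already known to be a norm on $\mathcal{P}_{\gamma_{s,s_1}}(^mE;F)$ by Proposition \ref{CaracterizacaoOrigem}, we get $P = 0$.

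Next I would handle homogeneity: for a scalar $\lambda$, an admissible constant $C$ for $P$ gives the admissible constant $|\lambda|\,C$ for $\lambda P$ (using $m$-homogeneity of $P$ and scaling of the $\gamma_s(F)$-norm), and conversely, so $\pi^{(ev)}(\lambda P) = |\lambda|\,\pi^{(ev)}(P)$; the case $\lambda = 0$ is trivial. For the triangle inequality, given $P, Q \in \mathcal{P}_{\gamma_{s,s_1}}^{(ev)}(^mE;F)$ and $\varepsilon > 0$, pick admissible constants $C_P < \pi^{(ev)}(P) + \varepsilon$ and $C_Q < \pi^{(ev)}(Q) + \varepsilon$. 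For any $b \in E$ and $(x_j)_{j=1}^{\infty} \in \gamma_{s_1}(E)$, the sequence $((P+Q)(b+x_j) - (P+Q)(b))_{j=1}^{\infty}$ equals the coordinate-wise sum of the corresponding sequences for $P$ and $Q$, both of which lie in $\gamma_s(F)$; since $\gamma_s(F)$ is a Banach space its norm is subadditive, giving
\begin{equation*}
\left\Vert \left((P+Q)(b+x_j) - (P+Q)(b)\right)_{j=1}^{\infty}\right\Vert_{\gamma_s(F)} \le (C_P + C_Q)\left(\Vert b\Vert + \left\Vert(x_j)_{j=1}^{\infty}\right\Vert_{\gamma_{s_1}(E)}\right)^m.
\end{equation*}
Hence $C_P + C_Q$ is admissible for $P+Q$, so $\pi^{(ev)}(P+Q) \le C_P + C_Q < \pi^{(ev)}(P) + \pi^{(ev)}(Q) + 2\varepsilon$, and letting $\varepsilon \to 0$ finishes the triangle inequality. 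One should also check that $\pi^{(ev)}(P)$ is itself an admissible constant (the infimum is attained), which follows because the admissible set is closed: if $C_k \downarrow \pi^{(ev)}(P)$ with each $C_k$ admissible, the inequality \eqref{EE1.30} passes to the limit for each fixed $b$ and $(x_j)_j$.

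I do not expect any genuine obstacle here — this is the routine "infimum-of-constants-is-a-norm" argument, and the phrase "by straightforward computations" in the statement signals as much. The only points requiring a little care are that $\gamma_s(F)$ is genuinely a Banach space (so that subadditivity of its norm is available) and that positive-definiteness is inherited from $\pi$ via the $b=0$ specialization rather than proved from scratch. If one wanted completeness of $\mathcal{P}_{\gamma_{s,s_1}}^{(ev)}(^mE;F)$ under $\pi^{(ev)}$ one could obtain it from the isometric embedding $P \mapsto \eta_{\gamma_{s,s_1}}(P)$ into the Banach space $\mathcal{P}(^mG;\gamma_s(F))$ with closed range, but since the corollary only asserts that $\pi^{(ev)}$ is a norm, the verification of the three axioms above suffices.
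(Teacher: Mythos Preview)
Your proposal is correct; the paper itself provides no proof beyond the preface ``By straightforward computations, we can get the following result,'' and your direct verification of the norm axioms (well-definedness via Theorem~\ref{T1.2.}, positive-definiteness by reduction to $\pi$ at $b=0$, and the routine homogeneity and subadditivity checks) is precisely the straightforward computation the paper alludes to.
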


It is not difficult to see that
\begin{remark}\label{O1.4.}
	$\pi^{(ev)}(P) = \left\Vert \eta_{\gamma_{s, s_1}}(P) \right\Vert$.
\end{remark}

An alternative way of constructing a normed space of the polynomials associated by $\prod_{s, s_1}^{(ev)}$ was introduced in \cite{S13} and denoted by $\mathcal{P}_{\prod_{\gamma_{s, s_1}}^{ev}}$, which would be to observe Proposition \ref{P1.1.} and to consider the set
\begin{equation*}
\mathcal{P}_{\prod_{\gamma_{s, s_1}}^{ev}} := \left\{P \in \mathcal{P}\text{; } \check{P}  \text{ is $\gamma_{s, s_1}$ - summing in every point} \right\}.
\end{equation*}
 and, in this set, to use the norm inherited from the ideal of multilinear applications $\prod_{\gamma_{s, s_1}}^{ev}$, that is,  
\begin{equation*}
\left\Vert P\right\Vert_{\mathcal{P}_{\prod_{\gamma_{s, s_1}}^{ev}}}:=\|\check{P}\|_{\prod_{\gamma_{s, s_1}}^{ev}} = \pi_{\gamma_{s, s_1}}^{(ev)}(\check{P}).
\end{equation*}
The advantage of this approach is that it is already established in the literature (see, for example, \cite[page 46]{BBJP06}) that this set, with this norm, is a Banach ideal of $n$-homogeneous polynomials.

But then, one question arises: What is the relationship between the norms $\pi^{(ev)}(P)$ and $\left\Vert P\right\Vert_{\mathcal{P}_{\prod_{\gamma_{s, s_1}}^{ev}}}$? The answer of this question is given in the next proposition.

\begin{proposition}\label{P.1.3.}
	The norm $\pi^{(ev)}(\cdot)$, defined in Corollary \ref{C1.1.}, satisfies the relation
	\begin{equation*}
	\pi^{(ev)}(P) \le \pi_{\gamma_{s, s_1}}^{(ev)}(\check{P}) \le \displaystyle\frac{m^m}{m!}\pi^{(ev)}(P)
	\end{equation*}
	for any $P \in \mathcal{P}_{\gamma_{s, s_1}}^{ev}(^mE; F)$.
\end{proposition}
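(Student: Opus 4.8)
The plan is to prove the two inequalities separately, using the two different descriptions of the norm on the polynomial side together with the polarization formula.

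For the first inequality, $\pi^{(ev)}(P) \le \pi_{\gamma_{s,s_1}}^{(ev)}(\check P)$, the idea is to estimate the quantity $\left\Vert \eta_{\gamma_{s,s_1}}(P)\left(\left(b,(x_j)_{j=1}^{\infty}\right)\right)\right\Vert_{\gamma_s(F)} = \left\Vert\left(P(b+x_j)-P(b)\right)_{j=1}^{\infty}\right\Vert_{\gamma_s(F)}$ directly in terms of $\check P$. Writing the symmetric $m$-linear form out via the binomial-type expansion
\[
P(b+x_j)-P(b) = \sum_{k=1}^{m}\binom{m}{k}\check P\left(b^{m-k},x_j^{k}\right),
\]
one treats each term: for a fixed $k$, the sequence $\left(\check P(b^{m-k},x_j^{k})\right)_{j=1}^{\infty}$ lies in $\gamma_s(F)$ because $\check P$ is $\gamma_{s,s_1}$-summing at $(b,\dots,b,0,\dots,0)$ (more precisely, one uses the inequality characterization of $\prod_{\gamma_{s,s_1}}^{ev}$ from \cite{S13}, which gives a bound of the shape $\left\Vert\left(\check P(b^{m-k},x_j^k)\right)_j\right\Vert_{\gamma_s(F)}\le \pi_{\gamma_{s,s_1}}^{(ev)}(\check P)\,\|b\|^{m-k}\,\left\Vert(x_j)_j\right\Vert_{\gamma_{s_1}(E)}^{k}$, since the multilinear norm controls all the mixed evaluations). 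Summing over $k=1,\dots,m$ and using $\sum_{k=1}^m\binom{m}{k}\|b\|^{m-k}\|(x_j)_j\|^{k}\le (\|b\|+\|(x_j)_j\|_{\gamma_{s_1}(E)})^m$ yields exactly inequality \eqref{EE1.30} with constant $\pi_{\gamma_{s,s_1}}^{(ev)}(\check P)$; by Corollary \ref{C1.1.} (or Remark \ref{O1.4.}) this gives $\pi^{(ev)}(P)\le \pi_{\gamma_{s,s_1}}^{(ev)}(\check P)$.

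For the second inequality, $\pi_{\gamma_{s,s_1}}^{(ev)}(\check P)\le \frac{m^m}{m!}\pi^{(ev)}(P)$, the plan is to invoke the polarization formula \cite[Corollary 1.6]{DJT95}, which expresses $\check P(a_1,\dots,a_m)$ as an average
\[
\check P(a_1,\dots,a_m) = \frac{1}{m!\,2^m}\sum_{\varepsilon_i=\pm 1}\varepsilon_1\cdots\varepsilon_m\,P(\varepsilon_1 a_1+\cdots+\varepsilon_m a_m).
\]
Applying this coordinatewise to sequences $(x_j^{(1)})_j,\dots,(x_j^{(m)})_j\in\gamma_{s_1}(E)$ and a base point written suitably, one writes $\check P\big(b+x_j^{(1)},\dots,b+x_j^{(m)}\big)-(\text{terms without all }x^{(i)})$ — or better, one works directly with the formula for $\check P$ applied to the shifted arguments and uses that each term $P(\varepsilon_1(b+x_j^{(1)})+\cdots)-P(\varepsilon_1 b+\cdots)$ is controlled by $\pi^{(ev)}(P)$ via \eqref{EE1.30}, then invokes linear stability and the product condition $\gamma_{s_1}(\mathbb K)\cdots\gamma_{s_1}(\mathbb K)\overset{1}\hookrightarrow\gamma_{s_1}(\mathbb K)$ to recombine into the mixed norm $\prod_{m=1}^{n}\|(x_j^{(m)})_j\|_{\gamma_{s_1}(E)}$. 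The constant $\frac{m^m}{m!}$ is the standard polarization constant that appears when one passes from the estimate on $P$ (which involves the $m$-th power of a sum) to the symmetric form $\check P$ — concretely, $\sum_{\varepsilon}|\varepsilon_1\cdots\varepsilon_m| = 2^m$ cancels the $2^m$, and bounding $\|\varepsilon_1 a_1+\cdots+\varepsilon_m a_m\|\le \|a_1\|+\cdots+\|a_m\|$ and then optimizing (all $a_i$ equal norm) produces the factor $m^m/m!$ against $m!$ from the denominator; combined with the characterization of $\pi_{\gamma_{s,s_1}}^{(ev)}(\check P)$ from \cite{S13} as the best constant in the mixed inequality, this gives the claim.

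The main obstacle I expect is the bookkeeping in the second inequality: one must be careful that the polarization identity is applied to the \emph{increments} $P(b+x_j)-P(b)$ in a way that produces, after expansion, precisely the mixed multilinear evaluations $\check P(x_j^{(1)},\dots,x_j^{(m)})$ (possibly with base-point shifts that have to be absorbed) rather than an expression one cannot control termwise. The clean route is to first reduce to the homogeneous-at-the-origin situation — note that once $(a)\Leftrightarrow(c)$ of Theorem \ref{T1.2.} is in hand, and using that $\check P$ evaluated at generic points can be recovered from $\check P$ evaluated along $(b+x_j^{(1)},\dots,b+x_j^{(m)})$ — and then apply Proposition \ref{CaracterizacaoOrigem}-type reasoning together with \cite[Proposition 2]{S13}. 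The rest is the routine verification that the product condition and linear stability of the sequence classes let the $m$ separate norms factor out, which the excerpt has already set up for exactly this purpose.
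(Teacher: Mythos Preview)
Your plan is essentially sound and, for the second inequality, coincides with the paper's argument: apply the polarization formula to the increments $\check P(a_1+x_j^{(1)},\dots,a_m+x_j^{(m)})-\check P(a_1,\dots,a_m)$, recognize each resulting summand as $\eta_{\gamma_{s,s_1}}(P)$ evaluated at the point $\bigl(\epsilon_1 a_1+\cdots+\epsilon_m a_m,\ (\epsilon_1 x_j^{(1)}+\cdots+\epsilon_m x_j^{(m)})_j\bigr)$ of $G=E\times\gamma_{s_1}(E)$, and bound by $\|\eta_{\gamma_{s,s_1}}(P)\|$ times the $m$-th power of the $G$-norm; the factor $m^m/m!$ appears exactly as you say. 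Two points, however, deserve correction.

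First, your invocation of the product condition $\gamma_{s_1}(\mathbb K)\cdots\gamma_{s_1}(\mathbb K)\overset{1}{\hookrightarrow}\gamma_{s_1}(\mathbb K)$ is a red herring. The proof needs only the triangle inequality in $\gamma_{s_1}(E)$ to get $\bigl\|(\epsilon_1 x_j^{(1)}+\cdots+\epsilon_m x_j^{(m)})_j\bigr\|_{\gamma_{s_1}(E)}\le\sum_i\bigl\|(x_j^{(i)})_j\bigr\|_{\gamma_{s_1}(E)}$; no scalar sequences are ever multiplied. The product shape $\prod_i\bigl(\|a_i\|+\|(x_j^{(i)})_j\|\bigr)$ in the target is already built into the identification $\pi_{\gamma_{s,s_1}}^{(ev)}(\check P)=\|\Phi(\check P)\|$ from \cite{S13}, where $\Phi(\check P)$ is an $m$-linear map on $G^m$, so that the sup runs over a product of unit balls and the bound $\sum_i\|(a_i,(x_j^{(i)})_j)\|_G\le m$ yields the $m^m$ directly. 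Your ``clean route'' via reduction to the origin and \cite[Proposition 2]{S13} would give the wrong norm (the at-the-origin one, not $\pi_{\gamma_{s,s_1}}^{(ev)}$).

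Second, for the first inequality you expand $P(b+x_j)-P(b)$ binomially and estimate each $\check P(b^{m-k},x_j^{k})$ separately. This works (take base point $(b,\dots,b,0,\dots,0)$ and zero increments in the first $m-k$ slots), but it is longer than necessary. The paper simply observes $P(a+x_j)-P(a)=\check P\bigl((a+x_j)^m\bigr)-\check P(a^m)$ and applies the multilinear characterization from \cite{S13} once, with base point $(a,\dots,a)$ and all $m$ increment sequences equal to $(x_j)_j$; the bound $\pi_{\gamma_{s,s_1}}^{(ev)}(\check P)\bigl(\|a\|+\|(x_j)_j\|_{\gamma_{s_1}(E)}\bigr)^m$ drops out in one line.
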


\begin{proof}
	If $P \in \mathcal{P}_{\gamma_{s, s_1}}^{ev}(^mE; F)$, then, by Proposition \ref{P1.1.}, $\check{P}$ is $\gamma_{s, s_1}$-summing in every point. In this way, for any $\left(x_j \right)_{j=1}^{\infty} \in \gamma_{s_1}(E)$ and $a \in E$, we have
\begin{align*}
\left\|\left(P\left(a + x_j \right) - P(a) \right)_{j=1}^{\infty} \right\|_{\gamma_s(F)} &= \left\|\left(\check{P}\left(a + x_j \right)^m - \check{P}(a)^m \right)_{j=1}^{\infty} \right\|_{\gamma_s(F)}\\
&\le \pi_{\gamma_{s, s_1}}^{(ev)}(\check{P})\left(\|a\| + \left\|\left(x_j \right)_{j=1}^{\infty} \right\|_{\gamma_{s_1}(E)} \right)^m,
\end{align*}
from which it follows that $\pi^{(ev)}(P) \le \pi_{\gamma_{s, s_1}}^{(ev)}(\check{P})$.

For the other inequality, we will use the same tools that appear in the demonstration of  \cite[Theorem $2$]{S13}. Let $G = E \times \gamma_{s_1}(E) $ be gifted with sum norm and $\Phi : \prod_{\gamma_{s, s_1}}^{ev}(E^m; F) \rightarrow \mathcal{L}(G,\overset{m}{\dots}, G; \gamma_s(F))$ be defined by 
$$\Phi(T)\left(\left(a_1,\left(x_j^{(1)}\right)_{j=1}^{\infty}\right),\dots,\left(a_m,\left(x_j^{(m)}\right)_{j=1}^{\infty}\right)\right)=\left(T\left(a_1+x_j^{(1)},\dots,a_m+x_j^{(m)}\right)-T(a_1,\dots,a_m)\right)_{j=1}^{\infty}.$$
In \cite{S13}, we find that $\pi_{\gamma_{s, s_1}}^{ev}(\check{P}) = \|\Phi(\check{P})\|$.

Note that, for any $\left(x_j^{(i)}\right)_{j=1}^{\infty} \in \gamma_{s_1}(E)$ and $\epsilon_i = \pm 1$, $i=1,\dots, m$, we have that $\left(\epsilon_i x_j^{(i)} \right)_{j=1}^{\infty} \in \gamma_{s_1}(E)$. Then, $\left(\epsilon_1 x_j^{(1)} +\cdots + \epsilon_m x_j^{(m)} \right)_{j=1}^{\infty} \in \gamma_{s_1}(E)$ and
\begin{equation*}
\left\|\left(\epsilon_1 x_j^{(1)} +\cdots + \epsilon_m x_j^{(m)} \right)_{j=1}^{\infty} \right\|_{\gamma_{s_1}(E)} \le \left\|\left( x_j^{(1)}  \right)_{j=1}^{\infty} \right\|_{\gamma_{s_1}(E)} +\cdots + \left\|\left( x_j^{(m)} \right)_{j=1}^{\infty} \right\|_{\gamma_{s_1}(E)}.
\end{equation*}
Therefore,
\begin{align*}
&\|\Phi(\check{P})\|\\
&= \sup_{\left\|\left(a_i,\left(x_j^{(i)}\right)_{j=1}^{\infty}\right)\right\|_{G} \le 1} \left\|\Phi(\check{P})\left((a_1, (x_j^{(1)})_{j=1}^{\infty}),\cdots, (a_m, (x_j^{(m)})_{j=1}^{\infty})\right)\right\|_{\gamma_s(F)}\\
&= \sup_{\left\|\left(a_i,\left(x_j^{(i)}\right)_{j=1}^{\infty}\right)\right\|_{G} \le 1} \left\|(\check{P}(a_1 + x_j^{(1)},\dots, a_m + x_j^{(m)})-\check{P}(a_1,\dots, a_m))_{j=1}^{\infty}\right\|_{\gamma_s(F)}\\
&\le \frac{1}{2^mm!} \sup_{\left\|\left(a_i,\left(x_j^{(i)}\right)_{j=1}^{\infty}\right)\right\|_{G} \le 1}\sum_{\epsilon_i = \pm 1}\left\|(P(\epsilon_1(a_1 + x_j^{(1)})+\cdots+ \epsilon_m(a_m + x_j^{(m)}) ) - P\left(\epsilon_1a_1+\cdots + \epsilon_ma_m\right) )_{j=1}^{\infty} \right\|_{\gamma_s(F)}\\
&= \frac{1}{2^mm!} \sup_{\left\|\left(a_i,\left(x_j^{(i)}\right)_{j=1}^{\infty}\right)\right\|_{G} \le 1}\sum_{\epsilon_i = \pm 1}\left\|\eta_{\gamma_{s, s_1}}(P)\left(\left(\epsilon_1a_1+\cdots + \epsilon_ma_m, \left(\epsilon_1x_j^{(1)}+\cdots+ \epsilon_mx_j^{(m)}\right)_{j=1}^{\infty} \right) \right) \right\|_{\gamma_s(F)}\\
&\le \frac{1}{2^mm!} \sup_{\left\|\left(a_i,\left(x_j^{(i)}\right)_{j=1}^{\infty}\right)\right\|_{G} \le 1}\sum_{\epsilon_i = \pm 1} \|\eta_{\gamma_{s, s_1}}(P)\|\left\|\left(\epsilon_1a_1+\cdots + \epsilon_ma_m, \left(\epsilon_1x_j^{(1)}+\cdots+ \epsilon_mx_j^{(m)}\right)_{j=1}^{\infty} \right)\right\|_{G}^m\\
&\le \frac{\|\eta_{\gamma_{s, s_1}}(P)\|}{2^mm!} \sup_{\left\|\left(a_i,\left(x_j^{(i)}\right)_{j=1}^{\infty}\right)\right\|_{G} \le 1}\sum_{\epsilon_i = \pm 1}\left(\|a_1\| + \left\|\left(x_j^{(1)}\right)_{j=1}^{\infty} \right\|_{\gamma_{s_1}(E)} +\cdots + \|a_m\| + \left\|\left(x_j^{(m)}\right)_{j=1}^{\infty} \right\|_{\gamma_{s_1}(E)} \right)^m\\
&= \frac{\|\eta_{\gamma_{s, s_1}}(P)\|}{m!} \sup_{\left\|\left(a_i,\left(x_j^{(i)}\right)_{j=1}^{\infty}\right)\right\|_{G} \le 1}\left(\left\|\left(a_1,\left(x_j^{(1)}\right)_{j=1}^{\infty}\right)\right\|_G +\cdots + \left\|\left(a_m,\left(x_j^{(m)}\right)_{j=1}^{\infty}\right)\right\|_G \right)^m\\
&= \frac{m^m}{m!}\|\eta_{\gamma_{s, s_1}}(P)\|,
\end{align*}
where the function $\eta_{\gamma_{s, s_1}}(P)$ was defined in the proof of Theorem \ref{T1.2.}. Therefore, it follows from Remark \ref{O1.4.} that
\begin{equation*}
\pi^{ev}(P) \le \pi_{\gamma_{s, s_1}}^{ev}(\check{P}) \le \frac{m^m}{m!}\pi^{ev}(P).
\end{equation*}		
\end{proof}

This proposition gives us a relationship that is  satisfactory between $\pi^{ev}(\cdot)$ and $\pi_{\gamma_{s, s_1}}^{ev}(\cdot)$. However, it is important to note that this result was already expected because there is a well-known inequality in the literature that establishes a relationship between the norm of the a $m$-homogeneous polynomial $P$ and the symmetric $m$-linear application associate to $P$, by
\begin{equation*}
\left\Vert P\right\Vert\le\left\Vert\check{P}\right\Vert\le\frac{m^m}{m!}\left\Vert P\right\Vert
\end{equation*}
which was shown in \cite[Theorem 2.2]{mujica}. This same shows that the constant $m^m/m!$ is the best possible solution. For more details to see \cite[example 2I]{mujica}. 

We also emphasize that the result presented in Proposition \ref{P1.1.} is of great importance because through it we have that $\mathcal{P}_{\gamma_{s, s_1}}^{ev}$ is a homogeneous polynomials  ideal. We now need proof that this is a  normed homogeneous polynomials  ideal and complete (Banach), with the norm $\pi^{ev}(\cdot)$.

The next proposition shows that $\pi^{(ev)}(id_{\mathbb{K}}) = 1$. The proof can be obtained by following \cite[Proposition 4.3]{BBJP06} with the necessary adaptations.

\begin{proposition}
Let $id_{\mathbb{K}} : \mathbb{K} \longrightarrow \mathbb{K}$ given by $id_{\mathbb{K}}(x) = x^m$ and suppose that $\gamma_{s_1}(\mathbb{K})\overset{m}{\cdots} \gamma_{s_1}(\mathbb{K}) \overset{1}{\hookrightarrow} \gamma_s(\mathbb{K})$. Then, $id_{\mathbb{K}} \in \mathcal{P}_{\gamma_{s, s_1}}^{ev}(^m\mathbb{K}; \mathbb{K})$ and
\begin{equation*}
\pi^{(ev)}(id_{\mathbb{K}}) = 1.
\end{equation*}
\end{proposition}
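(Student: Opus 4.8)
The plan is to verify the two assertions separately: first that $id_{\mathbb{K}}$ belongs to $\mathcal{P}_{\gamma_{s,s_1}}^{ev}(^m\mathbb{K};\mathbb{K})$, and then that its norm $\pi^{(ev)}(\cdot)$ equals $1$. For membership, I would use the characterization in Theorem \ref{T1.2.}, specifically the inequality in item $(c)$. Fix $b\in\mathbb{K}$ and $(\lambda_j)_{j=1}^{\infty}\in\gamma_{s_1}(\mathbb{K})$. Then $id_{\mathbb{K}}(b+\lambda_j)-id_{\mathbb{K}}(b)=(b+\lambda_j)^m-b^m=\sum_{k=1}^{m}\binom{m}{k}b^{m-k}\lambda_j^{k}$. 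Each summand is a scalar multiple of a product of $k$ copies of $(\lambda_j)_j$ (times a constant sequence, which lies in $\gamma_{s_1}(\mathbb{K})$ with norm at most $|b|$ since $\|e_j\|=1$ and the class is finitely determined and linearly stable); by the hypothesis $\gamma_{s_1}(\mathbb{K})\overset{k}{\cdots}\gamma_{s_1}(\mathbb{K})\overset{1}{\hookrightarrow}\gamma_{s_1}(\mathbb{K})\overset{1}{\hookrightarrow}\cdots\overset{1}{\hookrightarrow}\gamma_s(\mathbb{K})$ — more carefully, by iterating the product property and the embedding $\gamma_{s_1}(\mathbb{K})^m\overset{1}{\hookrightarrow}\gamma_s(\mathbb{K})$ — each term lies in $\gamma_s(\mathbb{K})$ with controlled norm. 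Summing up and using the binomial theorem on norms one gets $\|((b+\lambda_j)^m-b^m)_j\|_{\gamma_s(\mathbb{K})}\le\sum_{k=1}^m\binom{m}{k}|b|^{m-k}\|(\lambda_j)_j\|_{\gamma_{s_1}(\mathbb{K})}^{k}\le(|b|+\|(\lambda_j)_j\|_{\gamma_{s_1}(\mathbb{K})})^m$, which is exactly $(c)$ with $C=1$; hence $id_{\mathbb{K}}\in\mathcal{P}_{\gamma_{s,s_1}}^{ev}(^m\mathbb{K};\mathbb{K})$ and $\pi^{(ev)}(id_{\mathbb{K}})\le1$.

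For the reverse inequality $\pi^{(ev)}(id_{\mathbb{K}})\ge1$, I would exhibit a single test sequence and a single point $b$ that force the constant $C$ up to $1$. The natural choice is $b=0$ and $(\lambda_j)_j=e_1=(1,0,0,\dots)$: then $\|(\lambda_j)_j\|_{\gamma_{s_1}(\mathbb{K})}=\|e_1\|_{\gamma_{s_1}(\mathbb{K})}=1$ by the defining property of a sequence class, while $(id_{\mathbb{K}}(0+\lambda_j)-id_{\mathbb{K}}(0))_j=(\lambda_j^m)_j=e_1$, so $\|(id_{\mathbb{K}}(\lambda_j))_j\|_{\gamma_s(\mathbb{K})}=\|e_1\|_{\gamma_s(\mathbb{K})}=1$. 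Plugging into \eqref{EE1.30} with $b=0$ gives $1\le C\cdot(0+1)^m=C$, so every admissible $C$ satisfies $C\ge1$, whence $\pi^{(ev)}(id_{\mathbb{K}})\ge1$. Combining the two bounds yields $\pi^{(ev)}(id_{\mathbb{K}})=1$.

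The only genuinely delicate point is the first step: making sure the mixed terms $b^{m-k}\lambda_j^k$ are handled correctly when passing through the chain of embeddings $\gamma_{s_1}(\mathbb{K})\overset{1}{\hookrightarrow}\gamma_s(\mathbb{K})$ and the product relation $\gamma_{s_1}(\mathbb{K})^m\overset{1}{\hookrightarrow}\gamma_s(\mathbb{K})$, since for $k<m$ one needs to view $b^{m-k}\lambda_j^k$ as a product of $m$ factors from $\gamma_{s_1}(\mathbb{K})$ (namely $k$ copies of $(\lambda_j)_j$ and $m-k$ copies of a constant sequence of value $b^{(m-k)/(m-k)}$, i.e. padding with scalars of modulus $|b|$ appropriately distributed). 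This bookkeeping, together with the triangle inequality in $\gamma_s(\mathbb{K})$ and the binomial identity $\sum_{k=0}^m\binom{m}{k}|b|^{m-k}t^k=(|b|+t)^m$, is routine but is where all the hypotheses (finitely determined, linearly stable, the product embedding) are actually consumed. Everything else is a direct unwinding of definitions and of Theorem \ref{T1.2.} and Corollary \ref{C1.1.}.
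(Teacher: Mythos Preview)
Your lower-bound argument is fine: testing \eqref{EE1.30} at $b=0$ with $(\lambda_j)_j=e_1$ forces $C\ge 1$. The gap is in the upper bound. To control the cross terms $b^{m-k}\lambda_j^{k}$ for $1\le k<m$ you propose to regard each as an $m$-fold product, padding with $m-k$ copies of the constant sequence $(b,b,b,\ldots)$, and then to invoke $\gamma_{s_1}(\mathbb{K})\overset{m}{\cdots}\gamma_{s_1}(\mathbb{K})\overset{1}{\hookrightarrow}\gamma_s(\mathbb{K})$. But a nonzero constant sequence need not belong to $\gamma_{s_1}(\mathbb{K})$ at all --- take $\gamma_{s_1}=\ell_p$ with $p<\infty$ --- and none of the standing assumptions (finite determinacy, linear stability, $\|e_j\|=1$) forces it in. So the $m$-fold product embedding cannot be applied to those padded factors, and your binomial estimate does not close.

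This is a real obstruction, not missing bookkeeping. With $m=2$, $\gamma_{s_1}=\ell_2$, $\gamma_s=\ell_1$ the hypothesis $\ell_2\cdot\ell_2\overset{1}{\hookrightarrow}\ell_1$ holds by Cauchy--Schwarz, yet for $b\ne 0$ and $(\lambda_j)_j=(1/j)\in\ell_2$ one has $(b+\lambda_j)^2-b^2=2b\lambda_j+\lambda_j^2$ with $(2b/j)_j\notin\ell_1$, so $id_{\mathbb{K}}$ is not even $\gamma_{s,s_1}$-summing at $b$. What the argument actually requires is the stronger chain $\gamma_{s_1}(\mathbb{K})\overset{k}{\cdots}\gamma_{s_1}(\mathbb{K})\overset{1}{\hookrightarrow}\gamma_s(\mathbb{K})$ for \emph{every} $1\le k\le m$, not only $k=m$. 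Under that strengthened hypothesis your binomial computation goes through verbatim (the scalar $b^{m-k}$ pulls out of the $\gamma_s$-norm, and the $k$-fold product bound gives $\|(\lambda_j^k)_j\|_{\gamma_s}\le\|(\lambda_j)_j\|_{\gamma_{s_1}}^k$), yielding $C=1$; without it, the step you yourself flagged as ``the only genuinely delicate point'' fails.
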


The proof of the next Proposition follows the similar result from \cite{BBDP07}.

\begin{proposition}
The linear map

\begin{equation*}
\eta_{\gamma_{s, s_1}} : \mathcal{P}_{\gamma_{s, s_1}}^{(ev)}\left(^mE; F  \right) \longrightarrow \mathcal{P}\left(^mG; \gamma_s(F)  \right)
\end{equation*}
where $G = E \times \gamma_{s_1}(E)$, given by
\begin{equation}
\eta_{\gamma_{s, s_1}}(P)\left(\left(b, \left(x_j  \right)_{j=1}^{\infty}  \right)  \right) = \left(P(b +  x_j) - P(b)  \right)_{j=1}^{\infty}.
\end{equation}
is injective and its range is closed in $\mathcal{P}\left(^mG; \gamma_s(F)  \right)$.
\end{proposition}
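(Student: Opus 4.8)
The plan is to establish the two assertions—injectivity and closed range—separately, leaning on the norm identity $\pi^{(ev)}(P) = \|\eta_{\gamma_{s,s_1}}(P)\|$ recorded in Remark \ref{O1.4.} and on the fact (Corollary \ref{C1.1.}) that $\pi^{(ev)}(\cdot)$ is a norm.

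For injectivity, I would argue directly. Suppose $\eta_{\gamma_{s,s_1}}(P) = 0$; then for every $b \in E$ and every $(x_j)_{j=1}^\infty \in \gamma_{s_1}(E)$ we have $P(b + x_j) - P(b) = 0$ for all $j$. Taking $b = 0$ and, for a fixed $x \in E$, the finite sequence $(x, 0, 0, \dots) \in c_{00}(E) \subseteq \gamma_{s_1}(E)$, the first coordinate gives $P(x) = P(0) = 0$. Since $x$ was arbitrary, $P = 0$, so $\eta_{\gamma_{s,s_1}}$ is injective. (Alternatively, one invokes that $\eta_{\gamma_{s,s_1}}$ is a linear isometry onto its range with respect to $\pi^{(ev)}$ and $\|\cdot\|$ by Remark \ref{O1.4.}, which forces the kernel to be trivial.)

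For the closed-range assertion, I would use that $\eta_{\gamma_{s,s_1}}$ is an isometry from $\big(\mathcal{P}_{\gamma_{s,s_1}}^{(ev)}(^mE;F), \pi^{(ev)}\big)$ onto its image inside $\big(\mathcal{P}(^mG;\gamma_s(F)), \|\cdot\|\big)$. The key point is that $\mathcal{P}_{\gamma_{s,s_1}}^{(ev)}(^mE;F)$ is complete under $\pi^{(ev)}$: this is where I expect the real work to lie, and I would obtain it from Theorem \ref{T1.2.} together with the finite-determination hypothesis, by showing a $\pi^{(ev)}$-Cauchy sequence $(P_k)$ converges. One checks that for each fixed $b$ and each $(x_j)_j$ the sequences $(P_k(b+x_j)-P_k(b))_j$ are Cauchy in $\gamma_s(F)$ uniformly on bounded sets (using the characterizing inequality \eqref{EE1.30}), that pointwise $P_k(x) \to P(x)$ for some $m$-homogeneous polynomial $P$ (taking $b=0$ and finite sequences), and then that $P \in \mathcal{P}_{\gamma_{s,s_1}}^{(ev)}$ with $\pi^{(ev)}(P_k - P) \to 0$, again via \eqref{EE1.30} and finite determination. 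Once completeness is in hand, the image $\eta_{\gamma_{s,s_1}}\big(\mathcal{P}_{\gamma_{s,s_1}}^{(ev)}(^mE;F)\big)$ is a complete subspace of the normed space $\mathcal{P}(^mG;\gamma_s(F))$, hence closed. This mirrors the argument in \cite{BBDP07}, and the only subtlety is verifying that the limit polynomial genuinely lands in the summing class and that the isometry is preserved in the limit—both of which follow cleanly from the finitely-determined hypothesis on $\gamma_s$ and $\gamma_{s_1}$.
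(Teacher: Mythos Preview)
Your proposal is correct, but it inverts the logical order the paper adopts. In the paper, this Proposition (injective with closed range) is proved first---following \cite{BBDP07}---and completeness of $\big(\mathcal{P}_{\gamma_{s,s_1}}^{(ev)}(^mE;F),\pi^{(ev)}\big)$ is then obtained as an immediate corollary: once the range is closed in the Banach space $\mathcal{P}(^mG;\gamma_s(F))$ and $\eta_{\gamma_{s,s_1}}$ is an isometry, the domain inherits completeness from the codomain. You instead establish completeness of the domain directly and then deduce closed range from the isometry. Both routes are valid and rest on the same underlying computation (pass to a pointwise limit of a Cauchy sequence and verify via the inequality \eqref{EE1.30} and finite determination that the limit lies in the summing class). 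The paper's order is slightly more economical because it exploits the already-known completeness of $\mathcal{P}(^mG;\gamma_s(F))$ rather than building the limit inside the domain; your order has the side effect of rendering the paper's subsequent completeness Proposition redundant, since you have essentially proved it inside the present argument.
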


So, we easily get the following result:

\begin{proposition}
The space $\mathcal{P}_{\gamma_{s, s_1}}^{(ev)}\left(^mE; F  \right)$ is complete under the norm $\pi^{(ev)}(\cdot )$.
\end{proposition}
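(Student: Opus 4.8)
The plan is to deduce completeness of $\mathcal{P}_{\gamma_{s, s_1}}^{(ev)}(^mE; F)$ directly from the two facts just established: that the linear map $\eta_{\gamma_{s, s_1}}$ is injective with closed range in $\mathcal{P}(^mG; \gamma_s(F))$, and that, by Remark \ref{O1.4.}, it is an isometry onto its range, since $\pi^{(ev)}(P) = \|\eta_{\gamma_{s, s_1}}(P)\|$. Thus $\eta_{\gamma_{s, s_1}}$ is an isometric isomorphism from $\left(\mathcal{P}_{\gamma_{s, s_1}}^{(ev)}(^mE; F), \pi^{(ev)}(\cdot)\right)$ onto the closed subspace $\eta_{\gamma_{s, s_1}}\!\left(\mathcal{P}_{\gamma_{s, s_1}}^{(ev)}(^mE; F)\right)$ of $\mathcal{P}(^mG; \gamma_s(F))$.

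First I would recall that the space $\mathcal{P}(^mG; \gamma_s(F))$ of continuous $m$-homogeneous polynomials from the Banach space $G = E \times \gamma_{s_1}(E)$ into the Banach space $\gamma_s(F)$ is itself a Banach space under the usual polynomial norm; here one uses that $\gamma_s(F)$ is complete, which holds because $\gamma_s$ is a sequence class (Definition above) and hence assigns a Banach space to each $F \in \mathrm{BAN}$. A closed subspace of a Banach space is a Banach space, so $\eta_{\gamma_{s, s_1}}\!\left(\mathcal{P}_{\gamma_{s, s_1}}^{(ev)}(^mE; F)\right)$ is complete in the norm inherited from $\mathcal{P}(^mG; \gamma_s(F))$.

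Then I would invoke the elementary fact that completeness is preserved under isometric isomorphism: given a Cauchy sequence $(P_k)$ in $\left(\mathcal{P}_{\gamma_{s, s_1}}^{(ev)}(^mE; F), \pi^{(ev)}(\cdot)\right)$, the sequence $(\eta_{\gamma_{s, s_1}}(P_k))$ is Cauchy in the closed range of $\eta_{\gamma_{s, s_1}}$, hence converges there to some $\eta_{\gamma_{s, s_1}}(P)$ with $P \in \mathcal{P}_{\gamma_{s, s_1}}^{(ev)}(^mE; F)$; applying the (isometric) inverse of $\eta_{\gamma_{s, s_1}}$, one gets $\pi^{(ev)}(P_k - P) = \|\eta_{\gamma_{s, s_1}}(P_k) - \eta_{\gamma_{s, s_1}}(P)\| \to 0$, so $(P_k)$ converges in $\mathcal{P}_{\gamma_{s, s_1}}^{(ev)}(^mE; F)$. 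This yields completeness.

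Since both ingredients — injectivity with closed range, and the isometry identity of Remark \ref{O1.4.} — are already in hand, there is no genuine obstacle here; the proof is a two-line transport of structure. The only point that deserves a word of care is making sure $\eta_{\gamma_{s, s_1}}$ really lands in $\mathcal{P}(^mG; \gamma_s(F))$ (i.e. that its image consists of continuous $m$-homogeneous polynomials, which is exactly the content of Theorem \ref{T1.2.} and the preceding discussion) and that the norm on that target space is the honest polynomial sup-norm, so that "closed range" and "isometry" refer to the same norm. Granting that, the statement follows at once.
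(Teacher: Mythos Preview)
Your proposal is correct and is precisely the argument the paper intends: the statement is presented there without proof, immediately after the proposition that $\eta_{\gamma_{s,s_1}}$ is injective with closed range, with the words ``So, we easily get the following result,'' and together with Remark~\ref{O1.4.} ($\pi^{(ev)}(P)=\|\eta_{\gamma_{s,s_1}}(P)\|$) this is exactly the isometric-embedding-onto-a-closed-subspace argument you wrote out.
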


\begin{theorem}\label{T1.4.}
$\left(\mathcal{P}_{\gamma_{s, s_1}}^{(ev)}, \pi^{(ev)}(\cdot )  \right)$ is a homogeneous  polynomial ideal Banach between Banach spaces.
\end{theorem}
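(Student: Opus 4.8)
The plan is to verify, for $\bigl(\mathcal{P}_{\gamma_{s,s_1}}^{(ev)},\pi^{(ev)}(\cdot)\bigr)$, the polynomial analogues (say (Pa)--(P3)) of the multi-ideal axioms (Ma)--(M3) together with completeness of each component. The bulk of these properties has already been established in the preceding results, so the proof is mostly a matter of collecting them and then supplying the one genuinely new ingredient, namely the ideal inequality.

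First I would record what is already in hand. By the discussion following Proposition \ref{CaracterizacaoOrigem}, each component $\mathcal{P}_{\gamma_{s,s_1}}^{(ev)}(^mE;F)=\bigcap_{a\in E}\mathcal{P}_{\gamma_{s,s_1}}^{(a)}(^mE;F)$ is a linear subspace of $\mathcal{P}(^mE;F)$. That it contains all finite-type $m$-homogeneous polynomials follows from Proposition \ref{P1.1.}: if $P=\sum_{i=1}^{k}\varphi_i^{m}\,b_i$, then $\check P$ is a finite-type symmetric $m$-linear map, hence $\check P\in\prod_{\gamma_{s,s_1}}^{ev}$ by axiom (Ma) for the multi-ideal $\prod_{\gamma_{s,s_1}}^{ev}$ (\cite{S13}, \cite{BC17}), and therefore $P\in\mathcal{P}_{\gamma_{s,s_1}}^{ev}$. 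Corollary \ref{C1.1.} gives that $\pi^{(ev)}(\cdot)$ is a norm on each component; the proposition establishing $\pi^{(ev)}(id_{\mathbb{K}})=1$ supplies the normalization axiom; and the last proposition of the section (completeness of $\mathcal{P}_{\gamma_{s,s_1}}^{(ev)}(^mE;F)$ under $\pi^{(ev)}(\cdot)$) supplies completeness.

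It then remains to prove the ideal property and the multiplicativity of the norm: for $P\in\mathcal{P}_{\gamma_{s,s_1}}^{(ev)}(^mE;F)$, $u\in\mathcal{L}(G;E)$ and $v\in\mathcal{L}(F;H)$, that $v\circ P\circ u\in\mathcal{P}_{\gamma_{s,s_1}}^{(ev)}(^mG;H)$ with $\pi^{(ev)}(v\circ P\circ u)\le\|v\|\,\pi^{(ev)}(P)\,\|u\|^{m}$. I would argue this directly from linear stability, not through $\check P$. Given $c\in G$ and $(y_j)_{j=1}^{\infty}\in\gamma_{s_1}(G)$, linear stability of $\gamma_{s_1}$ gives $(u(y_j))_{j=1}^{\infty}\in\gamma_{s_1}(E)$ with $\|(u(y_j))_j\|_{\gamma_{s_1}(E)}\le\|u\|\,\|(y_j)_j\|_{\gamma_{s_1}(G)}$, and
\[
(v\circ P\circ u)(c+y_j)-(v\circ P\circ u)(c)=v\bigl(P(u(c)+u(y_j))-P(u(c))\bigr).
\]
Applying the characterization Theorem \ref{T1.2.}$(c)$ to $P$ at the point $u(c)$ (with the constant $\pi^{(ev)}(P)$), the sequence inside $v$ lies in $\gamma_s(F)$ with norm at most $\pi^{(ev)}(P)\bigl(\|u(c)\|+\|(u(y_j))_j\|_{\gamma_{s_1}(E)}\bigr)^{m}$; applying linear stability of $\gamma_s$ then yields $\bigl((v\circ P\circ u)(c+y_j)-(v\circ P\circ u)(c)\bigr)_{j}\in\gamma_s(H)$ with
\[
\bigl\|\bigl((v\circ P\circ u)(c+y_j)-(v\circ P\circ u)(c)\bigr)_j\bigr\|_{\gamma_s(H)}\le\|v\|\,\pi^{(ev)}(P)\,\|u\|^{m}\bigl(\|c\|+\|(y_j)_j\|_{\gamma_{s_1}(G)}\bigr)^{m}.
\]
Since $c$ and $(y_j)_j$ are arbitrary, Theorem \ref{T1.2.} shows $v\circ P\circ u\in\mathcal{P}_{\gamma_{s,s_1}}^{(ev)}(^mG;H)$, and taking the infimum over admissible constants gives the norm estimate.

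The only subtlety, and the place where one can go wrong, is precisely this last step: the ideal inequality must be derived directly from linear stability of the two sequence classes, because routing it through $\check P$ via Proposition \ref{P.1.3.} would only produce the constant $\tfrac{m^{m}}{m!}\,\|v\|\,\pi^{(ev)}(P)\,\|u\|^{m}$, which fails (P3) and is incompatible with $\pi^{(ev)}(id_{\mathbb{K}})=1$. Once this is in place, assembling it with the facts recalled above completes the proof that $\bigl(\mathcal{P}_{\gamma_{s,s_1}}^{(ev)},\pi^{(ev)}(\cdot)\bigr)$ is a Banach ideal of homogeneous polynomials.
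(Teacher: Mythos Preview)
Your proposal is correct and follows essentially the same route as the paper: the proof there also relies on the preceding results for the subspace, finite-type, normalization and completeness axioms, and then verifies the ideal inequality by exactly the linear-stability computation you give (applying $\gamma_{s_1}$-stability to $u$, Theorem~\ref{T1.2.}$(c)$ to $P$ at $u(c)$, and $\gamma_s$-stability to $v$). Your write-up is in fact slightly more careful, both in explicitly listing which earlier results cover which axioms and in recording the correct exponent $\|u\|^{m}$ in the final estimate (the paper's displayed bound has $\|u\|$, an evident typo).
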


\begin{proof}
Let $u \in \mathcal{L}(G; E)$, $P \in \mathcal{P}\left(^mE; F  \right)$, $t \in \mathcal{L}(F; H)$ and $a \in G$. Given that $\mathcal{P}_{\gamma_{s, s_1}}^{(ev)}$ is a homogeneous polynomials ideal, then $t \circ P \circ u \in \mathcal{P}_{\gamma_{s, s_1}}^{(ev)}(^mG; H)$. Now, if $\left(x_j  \right)_{j=1}^{\infty} \in \gamma_{s_1}(G)$, then it follows from the linear stability of $\gamma_s$ and $\gamma_{s_1}$ that
\begin{align*}
\left\|\left(t \circ P \circ u(a + x_j) - t \circ P \circ u(a) \right)_{j=1}^{\infty} \right\|_{\gamma_s(H)} 
&\le \|t\|\left\|\left(P \left(u(a + x_j) \right) - P \left(u(a) \right) \right)_{j=1}^{\infty} \right\|_{\gamma_s(F)} \nonumber \\
&= \|t\|\left\|\left(P \left(u(a) + u(x_j) \right) - P \left(u(a) \right) \right)_{j=1}^{\infty} \right\|_{\gamma_s(F)} \nonumber \\
&\le \|t\| \pi^{(ev)}(P) \left(\|u(a)\| + \left\|\left(u(x_j)  \right)_{j=1}^{\infty}  \right\|_{\gamma_{s_1}(E)}  \right)^m \nonumber \\
&\le \|t\| \pi^{(ev)}(P)\|u\|  \left(\|a\| + \left\|\left(x_j  \right)_{j=1}^{\infty}  \right\|_{\gamma_{s_1}(G)}  \right)^m \nonumber. 
\end{align*}
So, $\mathcal{P}_{\gamma_{s, s_1}}^{(ev)}$ satisfies the ideal property and
\begin{equation*}
\pi^{(ev)}(t \circ P \circ u) \le  \|t\| \pi^{(ev)}(P)\|u\| .
\end{equation*}
Therefore, $\left(\mathcal{P}_{\gamma_{s, s_1}}^{(ev)}, \pi^{(ev)}(\cdot) \right)$ is a  homogeneous polynomial ideal Banach between Banach spaces.

\end{proof}

\section{Coherence and compatibility}

In this section, we will study the coherence and the compatibility of the pairs formed by the ideals of $\gamma$-summing multilinear applications and $\gamma$-summing homogeneous polynomials. This concept that was introduced in the literature by Pellegrino and Ribeiro in \cite{PR14}, and their definitions are presented below.


We will consider the sequence $\left(\mathcal{U}_k, \mathcal{M}_k  \right)_{k=1}^N$, where each $\mathcal{U}_k$ is a (quasi-) normed ideal of $k$ - homogeneous polynomials and each $\mathcal{M}_k$ is a (quasi-) normed ideal of $k$ - linear mappings. The parameter $N$ can eventually be infinity.

\begin{definition}[Compatible pair of ideals]
Let $\mathcal{U}$ be a normed operator ideal and $N \in \left(\mathbb{N} - \{1\}  \right)\cup \{\infty \}$. A sequence $\left(\mathcal{U}_n, \mathcal{M}_n  \right)_{n=1}^N$, with $\mathcal{U}_1 = \mathcal{M}_1 = \mathcal{U}$, is compatible with $\mathcal{U}$  if there exists positive constants $\alpha_1, \alpha_2, \alpha_3$ such that for all Banach spaces $E$ and $F$, the following conditions hold for all $n \in \{2,\cdots, N\}:$

\begin{description}
\item $(CP1)$  If $k \in \{1,\dots, n\}$, $T \in \mathcal{M}_n(E_1,\dots, E_n;F)$ and $a_j \in E_j$ for all $j \in \{1,\dots, n\}\setminus\{k\}$, then $ T_{a_1,\dots, a_{k-1},a_{k+1},\dots, a_n} \in \mathcal{U}(E_k; F)$
and
\begin{equation*}
\left\Vert T_{a_1,\dots, a_{k-1},a_{k+1},\dots, a_n} \right\Vert \le \alpha_1 \left\Vert T\right\Vert_{\mathcal{M}_n}\|a_1\|\cdots \|a_{k-1}\| \ \|a_{k+1}\|\cdots \|a_n\|.
\end{equation*}

\item $(CP2)$ If $P \in \mathcal{U}_n(^nE; F)$ and $a \in E$, then $P_{a^{n-1}} \in \mathcal{U}(E; F)$ and
\begin{equation*}
\left\Vert P_{a^{n-1}}\right\Vert_{\mathcal{U}} \le \alpha_2 \max{\left\{\left\Vert\overset{\vee}{P}\right\Vert_{\mathcal{M}_n}, \left\Vert P \right\Vert_{\mathcal{U}_n} \right\}}\Vert a\Vert^{n-1}.
\end{equation*}

\item (CP3) If $u \in \mathcal{U}(E_n; F)$, $\gamma_j \in E'_j$ for all $j = 1,\dots, n-1$, then $\gamma_1 \cdots \gamma_{n-1}u \in \mathcal{M}_n(E_1,\dots, E_n; F)$
and 
\begin{equation*}
\left\Vert\gamma_1 \cdots\gamma_{n-1}u  \right\Vert_{\mathcal{M}_n} \le \alpha_3 \Vert\gamma_1\Vert\cdots\Vert\gamma_{n-1}\Vert\left\Vert u  \right\Vert_{\mathcal{U}}.
\end{equation*}

\item $(CP4)$ If $u \in \mathcal{U}(E; F)$ and $\gamma \in E'$, then $\gamma^{(n-1)}u \in \mathcal{U}_n(^{n}E; F)$.

\item $(CP5)$ $P$ belongs to $\mathcal{U}_n(^nE; F)$ if, and only if, $\overset{\vee}{P}$ belongs to $\mathcal{M}_n(^nE; F)$.
\end{description}

\end{definition}

\begin{definition}[Coherent pair of ideals]\label{D2.2.}
Let $\mathcal{U}$ be a normed operator ideal and let $N \in \mathbb{N} \cup \{\infty \}$. A sequence $\left(\mathcal{U}_k, \mathcal{M}_k  \right)_{k=1}^{N}$, with $\mathcal{U}_1 = \mathcal{M}_1 = \mathcal{U}$, is coherent if there exist positive constants $\beta_1, \beta_2, \beta_3$ such that for all Banach spaces $E$ and $F$ the following conditions hold for $k = 1,\dots,N-1:$

\begin{description}
\item $(CH1)$ If $T \in \mathcal{M}_{k + 1}\left(E_1,\dots, E_{k+1}; F \right)$ and $a_j \in E_j$ for $j=1,\dots, k+1$, then
\begin{equation*}
T_{a_j} \in \mathcal{M}_k\left(E_1,\dots, E_{j-1}, E_{j+1},\dots, E_{k+1}; F  \right)
\end{equation*}
and
\begin{equation*}
\left\|T_{a_j} \right\|_{\mathcal{M}_k} \le \beta_1 \left\| T \right\|_{\mathcal{M}_{k + 1}}\|a_j\|.
\end{equation*}

\item $(CH2)$ If $P \in \mathcal{U}_{k+1}\left(^{k+1}E; F \right)$, $a \in E$, then $P_a$ belongs to $\mathcal{U}_k\left(^kE; F \right)$ and
\begin{equation*}
\left\| P_a \right\|_{\mathcal{U}_k} \le \beta_2 \max{\left\{\left\| \overset{\vee}{P} \right\|_{\mathcal{M}_{k+1}}, \left\| P \right\|_{\mathcal{U}_{k+1}} \right\}}\Vert a \Vert.
\end{equation*}

\item $(CH3)$ If $T \in \mathcal{M}_k(E_1,\dots,E_k; F)$, $\gamma \in E'_{k+1}$, then
\begin{equation*}
\gamma T \in \mathcal{M}_{k + 1}(E_1,\dots,E_{k + 1}; F)
\end{equation*}
and 
\begin{equation*}
\left\|\gamma T\right\|_{\mathcal{M}_{k + 1}} \le \beta_3\Vert\gamma\Vert \left\|T \right\|_{ \mathcal{M}_{k}}.
\end{equation*}

\item $(CH4)$ If $P \in \mathcal{U}_{k}\left(^kE; F \right)$ and $\gamma \in E'$, then $\gamma P \in \mathcal{U}_{k+1}\left(^{k + 1}E; F \right).$

\item $(CH5)$ For all $k=1,\dots,N$, $P$ belongs to $\mathcal{U}_k(^kE; F)$ if, and only if, $\overset{\vee}{P}$ belongs to $\mathcal{M}_k(^kE; F)$.
\end{description}

\end{definition}


In this section, we will denote the Banach $\gamma_{s, s_1}$-summing $m$-linear operators ideal and the Banach $\gamma_{s, s_1}$-summing $m$-homogeneous polynomials ideal by $\left(\prod_{\gamma_{s, s_1}}^{m, (ev)}; \pi_{\gamma_{s, s_1}}^{m, ev}(\cdot)\right)$ and $\left(\mathcal{P}_{\gamma_{s, s_1}}^{m, (ev)}; \pi^{m, ev}(\cdot)\right)$, respectively. The reason for this is to evidence the linearity/homogeneity of the components of the ideal. 

We will study the coherence and the compatibility of the pair $\left(\mathcal{P}_{\gamma_{s, s_1}}^{m, (ev)}, \prod_{\gamma_{s, s_1}}^{m, (ev)}   \right)_{m=1}^{N}$ with the ideal $\prod_{\gamma_{s, s_1}}^{ev}$.


\begin{remark}\label{O2.1.}
For any Banach spaces $E$ and $F$, $ \prod_{\gamma_{s, s_1}}^{1, ev}(E; F) = \mathcal{P}_{\gamma_{s, s_1}}^{1, ev}(E; F)= \prod_{\gamma_{s, s_1}}^{ev}(E; F) $.
\end{remark}

In the next two propositions, we will check the conditions (CH1) and (CH2) of Definition \ref{D2.2.}

\begin{proposition}\label{P2.1.}
For each $T \in \prod_{\gamma_{s, s_1}}^{m+1, ev}(E_1,\dots, E_{m+1}; F)$ 
and $(a_1,\dots, a_{m+1}) \in E_1 \times \cdots \times E_{m+1}$,
\begin{equation*}
T_{a_k}(x_1, \dots , x_{k-1},  x_{k+1}, \dots , x_{m+1}) := T(x_1, \dots , x_{k-1}, a_k , x_{k+1}, \dots , x_{m+1})
\end{equation*}
belongs to $\prod_{\gamma_{s, s_1}}^{m, ev}(E_1,\dots , E_{k-1}, E_{k+1}, \dots, E_{m+1}; F)$ and 
\begin{equation*}
\pi_{\gamma_{s, s_1}}^{m, ev}(T_{a_k}) \le \pi_{\gamma_{s, s_1}}^{m+1, ev}(T) \|a_k\|.
\end{equation*}

\end{proposition}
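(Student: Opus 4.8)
The plan is to reduce the statement about the $(m+1)$-linear operator $T$ to the already-established inequality characterization of the multilinear ideal $\prod_{\gamma_{s,s_1}}^{m+1,ev}$, exactly as one does for the classical $(p,q)$-summing multilinear maps. First I would recall from \cite{S13} (the multilinear analogue of Theorem \ref{T1.2.} / Corollary \ref{C1.1.}) that $T \in \prod_{\gamma_{s,s_1}}^{m+1,ev}(E_1,\dots,E_{m+1};F)$ is equivalent to the existence of $C>0$ with
\begin{equation*}
\left\Vert \left( T(b_1+x_j^{(1)},\dots,b_{m+1}+x_j^{(m+1)}) - T(b_1,\dots,b_{m+1}) \right)_{j=1}^{\infty} \right\Vert_{\gamma_s(F)} \le C \prod_{i=1}^{m+1}\left( \Vert b_i\Vert + \left\Vert (x_j^{(i)})_{j=1}^{\infty}\right\Vert_{\gamma_{s_1}(E_i)} \right),
\end{equation*}
and that $\pi_{\gamma_{s,s_1}}^{m+1,ev}(T)$ is the infimum of such $C$. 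The multilinearity of $T$ in the fixed slot $k$ is what makes the "evaluated" operator behave well: fixing the $k$-th entry at $a_k$ kills the $k$-th difference factor and replaces it by the scalar $\Vert a_k\Vert$.

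The key computation is then the following. Fix $k$, take $b_i \in E_i$ and $(x_j^{(i)})_{j=1}^{\infty}\in\gamma_{s_1}(E_i)$ for $i\neq k$, and apply the inequality above with $b_k := a_k$ and $x_j^{(k)} := 0$ for all $j$ (so that $(x_j^{(k)})_{j=1}^{\infty}$ is the zero sequence, whose $\gamma_{s_1}$-norm is $0$). Since $T$ is linear in the $k$-th variable, $T(\dots,a_k+0,\dots)-T(\dots,\text{entries with }b_k\text{-part},\dots)$ collapses to $T_{a_k}(b_1+x_j^{(1)},\dots) - T_{a_k}(b_1,\dots)$ in the remaining $m$ variables, while the right-hand product becomes $\pi_{\gamma_{s,s_1}}^{m+1,ev}(T)\,\Vert a_k\Vert\,\prod_{i\neq k}\left(\Vert b_i\Vert + \Vert(x_j^{(i)})_{j=1}^{\infty}\Vert_{\gamma_{s_1}(E_i)}\right)$. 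Hence $T_{a_k}$ satisfies the $m$-linear inequality characterizing $\prod_{\gamma_{s,s_1}}^{m,ev}(E_1,\dots,E_{k-1},E_{k+1},\dots,E_{m+1};F)$ with constant $\pi_{\gamma_{s,s_1}}^{m+1,ev}(T)\,\Vert a_k\Vert$, so $T_{a_k}$ lies in that ideal and, taking the infimum over admissible constants, $\pi_{\gamma_{s,s_1}}^{m,ev}(T_{a_k}) \le \pi_{\gamma_{s,s_1}}^{m+1,ev}(T)\,\Vert a_k\Vert$.

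I would also record, for completeness, that $T_{a_k}$ is a well-defined continuous $m$-linear operator (immediate from continuity and multilinearity of $T$), and that since $\gamma_{s_1}$ is finitely determined the passage between the finite-sum and infinite-sum versions of the inequality is harmless — so it suffices to verify the estimate for finite sequences if one prefers. The only mild subtlety, and the place I would be most careful, is making sure the zero sequence is genuinely an admissible choice in the $k$-th slot and that plugging it in really does decouple the product on the right exactly as claimed; this is the step where the \emph{linearity} (not merely homogeneity) in the frozen coordinate is essential, and it is what distinguishes condition (CH1) for multilinear ideals from condition (CH2) for the polynomial ideals, where an extra $\max$ with the polynomial norm must appear.
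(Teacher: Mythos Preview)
Your proposal is correct and follows essentially the same approach as the paper: both arguments plug the zero sequence into the $k$-th slot (the paper does this explicitly for $k=1$ and declares the other cases similar) and then read off the $m$-linear inequality from the $(m+1)$-linear one, using that the $k$-th factor on the right collapses to $\Vert a_k\Vert$. Your write-up is in fact a bit more explicit than the paper's about why the zero-sequence substitution is legitimate and about invoking the inequality characterization from \cite{S13}, but the underlying idea is identical.
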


\begin{proof}
Let $T \in \prod_{\gamma_{s, s_1}}^{m+1, ev}(E_1,\dots, E_{m+1}; F)$ 
and $(a_1,\dots, a_{m+1}) \in E_1 \times \cdots \times E_{m+1}$, $\left(x_j^{(n)} \right)_{j=1}^{\infty} \in \gamma_{s_1}(E_n)$, for $n=1,\dots, k-1, k+1,\dots, m+1$. We will do the computations only for $k = 1$. The remaining cases are similar. Thus, for each $b_i \in E_i$ and $\left(x_j^{i}\right)_{j=1}^{\infty}\in\gamma_{s_1}(E_i)$, $i=2,\dots,m$, consider the null-sequence $\left(x_{j}^{(1)} \right)_{j=1}^{\infty} \in \gamma_{s_1}(E_1)$; that is, $x_{j}^{(1)}=0$ for every $j \in \mathbb{N}$. Then,
\begin{align*}
&\left(T_{a_1}(b_2 + x_j^{(2)}, \dots ,b_{m+1} + x_j^{(m+1)}) - T_{a_1}(b_2,\dots, b_{m+1}) \right)_{j=1}^{\infty}\\
&= \left(T(a_1 + x_j^{(1)}, b_2 + x_j^{(2)}, \dots ,b_{m+1} + x_j^{(m+1)}) - T(a_1, b_2,\dots, b_{m+1}) \right)_{j=1}^{\infty} \in \gamma_s(F).
\end{align*}
Thus, $T_{a_1} \in \prod_{\gamma_{s, s_1}}^{m, ev}(E_2,\dots ,E_{m+1}; F)$. So,
\begin{align*}
&\left\|\left(T_{a_1}(b_2 + x_j^{(2)}, \dots ,b_{m+1} + x_j^{(m+1)}) - T_{a_1}(b_2,..., b_{m+1}) \right)_{j=1}^{\infty} \right\|_{\gamma_s(F)} \le \\
&\le \pi_{\gamma_{s, s_1}}^{m+1, ev}(T)\|a_1\| \left(\|b_2\| + \left\|\left(x_j^{(2)} \right)_{j=1}^{\infty} \right\|_{\gamma_{s_1}(E_1)} \right) \cdots \left(\|b_{m+1}\| + \left\|\left(x_j^{(m+1)} \right)_{j=1}^{\infty} \right\|_{\gamma_{s_1}(E_m)} \right).
\end{align*}
Therefore,
\begin{equation*}
\pi_{\gamma_{s, s_1}}^{m, ev}(T_{a_1}) \le \pi_{\gamma_{s, s_1}}^{m+1, ev}(T) \|a_1\|.
\end{equation*}

\end{proof}

\begin{proposition}\label{P2.2.}
For each $P \in \mathcal{P}_{\gamma_{s, s_1}}^{m+1, ev}(^{m+1}E, F)$ and $a \in E$,
\begin{equation*}
P_a(x) := \check{P}(a, x, \overset{m}{\dots} , x) 
\end{equation*}
belongs to $\mathcal{P}_{\gamma_{s, s_1}}^{m, ev}(^{m}E, F)$ and
\begin{equation*}
\pi^{m, ev}(P_a) \le \pi_{\gamma_{s, s_1}}^{m+1, ev}(\check{P}) \|a\|.
\end{equation*}

\end{proposition}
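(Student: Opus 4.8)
The plan is to reduce this proposition to the corresponding statement about the associated symmetric multilinear map, Proposition \ref{P2.1.}, together with the norm comparison of Proposition \ref{P.1.3.}. Concretely, given $P \in \mathcal{P}_{\gamma_{s, s_1}}^{m+1, ev}(^{m+1}E; F)$ and $a \in E$, set $T = \check{P} \in \prod_{\gamma_{s, s_1}}^{m+1, ev}(E, \overset{m+1}{\dots}, E; F)$, which belongs to the multilinear ideal by $(CP5)$/$(CH5)$ (equivalently Proposition \ref{P1.1.}). Applying Proposition \ref{P2.1.} with $k=1$ and $a_1 = a$, we get that $T_{a} \in \prod_{\gamma_{s, s_1}}^{m, ev}(E, \overset{m}{\dots}, E; F)$ with $\pi_{\gamma_{s, s_1}}^{m, ev}(T_a) \le \pi_{\gamma_{s, s_1}}^{m+1, ev}(\check{P})\,\|a\|$. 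Now $P_a(x) = \check{P}(a, x, \overset{m}{\dots}, x) = T_a(x, \dots, x)$, so $T_a$ is a symmetric $m$-linear map whose diagonal is the $m$-homogeneous polynomial $P_a$. The symmetry of $T_a$ is inherited from the symmetry of $\check{P}$, hence $\widecheck{P_a} = T_a$.

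First I would spell out the membership claim: since $\widecheck{P_a} = T_a \in \prod_{\gamma_{s, s_1}}^{m, ev}$, Proposition \ref{P1.1.} (i.e. condition $(CH5)$) gives $P_a \in \mathcal{P}_{\gamma_{s, s_1}}^{m, ev}(^mE; F)$. Then for the norm estimate I would invoke the left-hand inequality of Proposition \ref{P.1.3.}, namely $\pi^{m, ev}(P_a) \le \pi_{\gamma_{s, s_1}}^{m, ev}(\widecheck{P_a}) = \pi_{\gamma_{s, s_1}}^{m, ev}(T_a)$, and chain it with the bound from Proposition \ref{P2.1.}:
\begin{equation*}
\pi^{m, ev}(P_a) \le \pi_{\gamma_{s, s_1}}^{m, ev}(T_a) \le \pi_{\gamma_{s, s_1}}^{m+1, ev}(\check{P})\,\|a\|,
\end{equation*}
which is exactly the desired conclusion.

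Alternatively, and perhaps more in the spirit of the direct proofs given above, one can argue without Proposition \ref{P.1.3.} by working with the inequality characterization of Theorem \ref{T1.2.} directly: for $(x_j)_{j=1}^\infty \in \gamma_{s_1}(E)$ and $b \in E$, write $P_a(b + x_j) - P_a(b) = \check{P}(a, b + x_j, \dots, b + x_j) - \check{P}(a, b, \dots, b)$, expand multilinearly, and estimate the resulting sequence in $\gamma_s(F)$ by applying the multilinear bound for $T = \check{P}$ evaluated with the fixed slot $a$ and the remaining slots carrying $b$ or $x_j$; this yields $\|(P_a(b+x_j) - P_a(b))_{j=1}^\infty\|_{\gamma_s(F)} \le \pi_{\gamma_{s, s_1}}^{m+1, ev}(\check{P})\,\|a\|\,(\|b\| + \|(x_j)_{j=1}^\infty\|_{\gamma_{s_1}(E)})^m$, and then taking the infimum over admissible constants in Corollary \ref{C1.1.} gives the stated bound.

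I expect the only genuine subtlety to be the bookkeeping identification $\widecheck{P_a} = T_a$ — that is, checking that freezing one slot of the symmetric $(m+1)$-linear map $\check{P}$ produces precisely the symmetric $m$-linear map associated with $P_a$ (so that the polarization constants in Proposition \ref{P.1.3.} apply with the correct degree $m$). This is routine but must be stated cleanly; everything else is an immediate citation of Propositions \ref{P1.1.}, \ref{P.1.3.} and \ref{P2.1.}, so no hard estimate is involved.
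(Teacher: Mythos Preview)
Your proposal is correct. The primary route you take --- reduce to Proposition~\ref{P2.1.} via the identification $\widecheck{P_a} = (\check{P})_a$ and then invoke the left inequality of Proposition~\ref{P.1.3.} --- is a genuinely different, more modular argument than the paper's. The paper instead works directly at the level of sequences: having $\check{P}\in\prod_{\gamma_{s,s_1}}^{m+1,ev}$ from Proposition~\ref{P1.1.}, it inserts the null sequence $(y_j)_j$ with $y_j=0$ in the $a$-slot so that
\[
P_a(b+x_j)-P_a(b)=\check{P}(a+y_j,\,b+x_j,\dots,b+x_j)-\check{P}(a,b,\dots,b)
\]
is literally a multilinear increment, and then reads off the bound $\pi_{\gamma_{s,s_1}}^{m+1,ev}(\check{P})\,\|a\|\,(\|b\|+\|(x_j)\|_{\gamma_{s_1}(E)})^m$ from the multilinear characterization --- no call to Proposition~\ref{P.1.3.} or explicit passage through $T_a$. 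Your approach buys reusability (it is just two citations once \ref{P2.1.} and \ref{P.1.3.} are in place); the paper's buys self-containment and avoids invoking the polarization-type inequality~\ref{P.1.3.}. Your alternative sketch is essentially the paper's argument, except that the paper uses the null-sequence trick rather than a multilinear expansion; the null-sequence device is slicker and is exactly what makes the estimate immediate.

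Your one flagged subtlety, $\widecheck{P_a}=(\check{P})_a$, is indeed routine: freezing one argument of a symmetric $(m{+}1)$-linear map yields a symmetric $m$-linear map whose diagonal is $P_a$, and uniqueness of the symmetric polar gives the identification.
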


\begin{proof}
Let $P \in \mathcal{P}_{\gamma_{s, s_1}}^{m+1, ev}(^{m+1}E, F)$ and $a \in E$. For any $b \in E$ and $\left(x_j \right)_{j=1}^{\infty} \in \gamma_{s_1}(E)$,
it follows from Proposition \ref{P1.1.} that $\check{P}$ is absolutely $\gamma_{s, s_1}$-summing in every point $(a_1,\dots, a_m) \in E\times \overset{m}{\cdots} \times E$. Again, consider the sequence $\left(y_{j} \right)_{j=1}^{\infty} \in \gamma_{s_1}(E_1)$, such that  $y_{j}=0$ for every $j \in \mathbb{N}$. Thus,
\begin{align*}
\left(P_a(b + x_j) - P_a(b)  \right)_{j=1}^{\infty} &= \left(\check{P}(a + y_j, b + x_j, \overset{m}{\dots}, b + x_j) - \check{P}(a, b,\overset{m}{\dots}, b )  \right)_{j=1}^{\infty} \in \gamma_s(F).
\end{align*}
Thus, $P_a \in \mathcal{P}_{\gamma_{s, s_1}}^{m, ev}(^{m}E, F)$. Furthermore,
\begin{align*}
\left\|\left(P_a(b + x_j) - P_a(b)  \right)_{j=1}^{\infty} \right\|_{\gamma_s(F)} \le \pi_{\gamma_{s, s_1}}^{m+1, ev}(\check{P}) \|a\| \left(\|b\| + \left\|\left(x_j \right)_{j=1}^{\infty} \right\|_{\gamma_{s_1}(E)} \right)^m.
\end{align*}
Therefore, 
\begin{equation*}
\pi^{m, ev}(P_a) \le \pi_{\gamma_{s, s_1}}^{m+1, ev}(\check{P}) \|a\|.
\end{equation*}
\end{proof}

The next definition contains an important property that will be used to prove (CH3) and (CH4) of Definition \ref{D2.2.}.

\begin{definition}
	Let $E$ be a Banach space and $\gamma_{s}$ be a sequence class. We say that the sequence class $\gamma_{s}$ is $\mathbb{K}$-closed when, for any $\left(x_j\right)_{j=1}^{\infty}\in\gamma_{s}\left(\mathbb{K}\right)$ and $\left(y_j\right)_{j=1}^{\infty}\in\gamma_{s}\left(E\right)$, the sequence $\left(z_j\right)_{j=1}^{\infty}\in\gamma_{s}\left(E\right)$, where $z_{j}=x_{j}y_{j}$ and
	$$
	\left\Vert\left(z_{j}\right)_{j=1}^{\infty}\right\Vert_{\gamma_{s}\left(E\right)}\le
	\left\Vert\left(x_{j}\right)_{j=1}^{\infty}\right\Vert_{\gamma_{s}\left(\mathbb{K}\right)}
	\left\Vert\left(y_{j}\right)_{j=1}^{\infty}\right\Vert_{\gamma_{s}\left(E\right)}
	$$	
\end{definition}

\begin{example}
	The sequence classes $\ell_p\langle \cdot\rangle$, $\ell_p(\cdot)$, $\ell_p^{mid}(\cdot)$ and $\ell_p^w(\cdot)$ are $\mathbb{K}$-closed.
\end{example}

\begin{definition}\label{DFC}
Let $\gamma_s$ and $\gamma_{s_1}$ be sequence classes. We say that $\gamma_s$ and $\gamma_{s_1}$ are finitely coincident, when $\gamma_{s}(E)=\gamma_{s_1}(E)$ and that
\begin{equation*}
\left\|\left(x_j \right)_{j=1}^{\infty} \right\|_{\gamma_{s}(E)} = \left\|\left(x_j \right)_{j=1}^{\infty} \right\|_{\gamma_{s_1}(E)}
\end{equation*}
for any finite-dimensional linear space $E$.
\end{definition}

\begin{remark}
In the next two propositions we will assume that the sequence class $\gamma_{s}$ is $\mathbb{K}$-closed and $\gamma_s$ and $\gamma_{s_1}$ are finitely coincident.
\end{remark}



\begin{proposition}\label{P2.3.}
Let $T \in \prod_{\gamma_{s, s_1}}^{m, ev}(E_1,..., E_m; F)$ and $\varphi \in E_{m+1}'$, so $\varphi T \in \prod_{\gamma_{s, s_1}}^{m+1,ev}(E_1,..., E_{m+1}; F)$
and

\begin{equation*}
\pi_{\gamma_{s, s_1}}^{m+1, ev}(\varphi T) \le \|\varphi \| \pi_{\gamma_{s, s_1}}^{m, ev}(T).
\end{equation*}
\end{proposition}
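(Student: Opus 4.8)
The plan is to show $\varphi T \in \prod_{\gamma_{s,s_1}}^{m+1,ev}(E_1,\dots,E_{m+1};F)$ by producing the required inequality from the characterization of the multilinear ideal (the analogue of Theorem~\ref{T1.2.} established in \cite{S13}), and to do so by reducing to the $m$-linear information about $T$. First I would fix $b_i \in E_i$ and $\left(x_j^{(i)}\right)_{j=1}^{\infty} \in \gamma_{s_1}(E_i)$ for $i=1,\dots,m+1$, and expand the difference
\begin{align*}
&\left((\varphi T)(b_1+x_j^{(1)},\dots,b_{m+1}+x_j^{(m+1)}) - (\varphi T)(b_1,\dots,b_{m+1})\right)_{j=1}^{\infty}\\
&= \left(\varphi(b_{m+1}+x_j^{(m+1)})\, T(b_1+x_j^{(1)},\dots,b_m+x_j^{(m)}) - \varphi(b_{m+1})\, T(b_1,\dots,b_m)\right)_{j=1}^{\infty}.
\end{align*}
The standard trick is to add and subtract a crossed term, writing this as
$\left(\varphi(b_{m+1}+x_j^{(m+1)})\left[T(b_1+x_j^{(1)},\dots,b_m+x_j^{(m)})-T(b_1,\dots,b_m)\right]\right)_{j=1}^{\infty}$ plus $\left(\varphi(x_j^{(m+1)})\, T(b_1,\dots,b_m)\right)_{j=1}^{\infty}$, using linearity of $\varphi$ on the last slot.

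Next I would estimate each of the two pieces in $\gamma_s(F)$. For the first piece, the scalar sequence $\left(\varphi(b_{m+1}+x_j^{(m+1)})\right)_{j=1}^{\infty}$ is bounded (indeed by $\|\varphi\|(\|b_{m+1}\|+\|(x_j^{(m+1)})_{j=1}^{\infty}\|_{\gamma_{s_1}(E_{m+1})})$ since $\gamma_{s_1}(E_{m+1})\hookrightarrow \ell_\infty(E_{m+1})$ with norm $1$); combining the $\mathbb{K}$-closedness of $\gamma_s$ with the fact that $T$ is $\gamma_{s,s_1}$-summing at the point $(b_1,\dots,b_m)$, the product sequence lies in $\gamma_s(F)$ and is bounded by
$\|\varphi\|\left(\|b_{m+1}\|+\|(x_j^{(m+1)})_{j=1}^{\infty}\|_{\gamma_{s_1}(E_{m+1})}\right)\pi_{\gamma_{s,s_1}}^{m,ev}(T)\prod_{i=1}^{m}\left(\|b_i\|+\|(x_j^{(i)})_{j=1}^{\infty}\|_{\gamma_{s_1}(E_i)}\right)$. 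For the second piece, $\left(\varphi(x_j^{(m+1)})\right)_{j=1}^{\infty}$ is the image of $\left(x_j^{(m+1)}\right)_{j=1}^{\infty}$ under the linear operator $\varphi\colon E_{m+1}\to\mathbb{K}$, so by linear stability it lies in $\gamma_{s_1}(\mathbb{K})$ with norm $\le \|\varphi\|\,\|(x_j^{(m+1)})_{j=1}^{\infty}\|_{\gamma_{s_1}(E_{m+1})}$; since $T(b_1,\dots,b_m)$ is a fixed vector of $F$, the sequence $\left(\varphi(x_j^{(m+1)})T(b_1,\dots,b_m)\right)_{j=1}^{\infty}$ is, up to the constant vector, a scalar multiple of a $\gamma_{s_1}(\mathbb{K})$-sequence. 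Here I would use that $\gamma_s$ and $\gamma_{s_1}$ are finitely coincident to land back in $\gamma_s$: the relevant sequence is supported on the one-dimensional span of $T(b_1,\dots,b_m)$, so on that finite-dimensional subspace the $\gamma_s$- and $\gamma_{s_1}$-norms agree, and $\|T(b_1,\dots,b_m)\|\le \pi_{\gamma_{s,s_1}}^{m,ev}(T)\prod_{i=1}^m\|b_i\|$ (a bound extracted from the inequality for $T$ by taking the null sequences $x_j^{(i)}=0$). Adding the two bounds and comparing with $\left(\sum_{i=1}^{m+1}(\|b_i\|+\|(x_j^{(i)})_{j=1}^{\infty}\|_{\gamma_{s_1}(E_i)})\right)^{m+1}$ yields the estimate with constant $\|\varphi\|\,\pi_{\gamma_{s,s_1}}^{m,ev}(T)$, which by the characterization gives both membership and $\pi_{\gamma_{s,s_1}}^{m+1,ev}(\varphi T)\le\|\varphi\|\,\pi_{\gamma_{s,s_1}}^{m,ev}(T)$.

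The main obstacle I anticipate is the bookkeeping in the second term: one must make sure the constant-vector factor $T(b_1,\dots,b_m)$ is controlled by $T$'s norm uniformly, and that "finitely coincident" is genuinely what is needed to pass a $\gamma_{s_1}$-scalar sequence times a fixed vector into $\gamma_s$ — this is why the hypotheses of $\mathbb{K}$-closedness and finite coincidence are imposed in the preceding remark. If one prefers to avoid splitting, an alternative is to observe directly that $\varphi(b_{m+1}+x_j^{(m+1)})T(b_1+x_j^{(1)},\dots,b_m+x_j^{(m)})$ minus $\varphi(b_{m+1})T(b_1,\dots,b_m)$ is exactly $\check{\,}$-type telescoping handled in \cite[Theorem~2]{S13}; but the split above is cleaner and isolates precisely where each hypothesis on the sequence classes is used. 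A final routine check is that $\varphi T$ is $(m+1)$-linear and continuous, which is standard, so the only real content is the norm inequality just described.
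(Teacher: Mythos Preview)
Your two-term add-and-subtract decomposition is cleaner and scales to general $m$ more gracefully than the paper's proof, which carries out a full multilinear expansion of the difference into $2^{m+1}-1$ monomials (written out for $m=2$) and then regroups. Both routes use exactly the same hypotheses on the sequence classes; your split simply isolates more directly where each hypothesis enters.

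There is, however, a genuine gap in your handling of the first piece. You invoke $\mathbb{K}$-closedness with the scalar sequence $\bigl(\varphi(b_{m+1}+x_j^{(m+1)})\bigr)_{j}$, justified only by its $\ell_\infty$-bound. But $\mathbb{K}$-closedness, as defined just before the proposition, requires the scalar factor to lie in $\gamma_s(\mathbb{K})$, not merely in $\ell_\infty(\mathbb{K})$; and constant nonzero sequences need not belong to $\gamma_s(\mathbb{K})$ (take $\gamma_s=\ell_p$). So this sequence is in general \emph{not} in $\gamma_s(\mathbb{K})$, and the hypothesis does not apply as you stated it. The repair is to split that scalar once more: the constant part $\varphi(b_{m+1})$ acts by ordinary scalar multiplication on $\gamma_s(F)$ and needs no hypothesis, while the variable part $\bigl(\varphi(x_j^{(m+1)})\bigr)_{j}$ lies in $\gamma_{s_1}(\mathbb{K})=\gamma_s(\mathbb{K})$ by linear stability and finite coincidence, so $\mathbb{K}$-closedness now applies legitimately. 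With this extra split you recover exactly the bound you wrote, so the final inequality and the norm estimate survive; the paper's full expansion is doing precisely this constant/variable separation term by term.

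A smaller slip: your parenthetical justification of $\|T(b_1,\dots,b_m)\|\le \pi_{\gamma_{s,s_1}}^{m,ev}(T)\prod_{i=1}^m\|b_i\|$ ``by taking the null sequences $x_j^{(i)}=0$'' does not work, since null sequences make both sides of the defining inequality vanish. The inequality is nonetheless true --- it is the standard domination $\|T\|\le\pi_{\gamma_{s,s_1}}^{m,ev}(T)$ for normed multi-ideals (alternatively, take base points $0$ and the single-vector sequences $b_i e_1$).
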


\begin{proof}
We will do only the case $m = 2$. The other cases are analogous. Let
$T \in \prod_{\gamma_{s, s_1}}^{2, ev}(E_1, E_2; F)$, $\varphi \in E_{3}'$ and $\left(x_j^{(i)} \right)_{j=1}^{\infty} \in \gamma_{s_1}(E_i)$, $a_i \in E_i$, $i=1, 2, 3$. Thus, because $\gamma_s$ is linearly stable, finitely determined, $\mathbb{K}$-closed, and because $\gamma_s$ and $\gamma_{s_1}$ are finitely coincident, it follows immediately that $$\left(\varphi T\left(a_1 + x_j^{(1)}, a_2 + x_j^{(2)}, a_3 + x_j^{(3)}\right) - \varphi T(a_1, a_2, a_3) \right)_{j=1}^{\infty}\in \gamma_s(F)$$
and that,
\begin{align*}
&\left\|\left(\varphi T\left(a_1 + x_j^{(1)}, a_2 + x_j^{(2)}, a_3 + x_j^{(3)}\right) - \varphi T(a_1, a_2, a_3) \right)_{j=1}^{\infty} \right\|_{\gamma_s(F)}\\
\le& \left\|\left(\varphi(a_3)T\left(a_1, x_j^{(2)}\right)\right)_{j=1}^{\infty}\right\|_{\gamma_s(F)} + \left\|\left(\varphi(a_3)T\left(x_j^{(1)}, a_2 \right)\right)_{j=1}^{\infty}\right\|_{\gamma_s(F)} + \left\| \left(\varphi(a_3)T\left(x_j^{(1)}, x_j^{(2)} \right)\right)_{j=1}^{\infty}\right\|_{\gamma_s(F)} +\\
+& \left\|\left(\varphi\left(x_j^{(3)}\right)T\left(a_1, x_j^{2} \right) \right)_{j=1}^{\infty} \right\|_{\gamma_s(F)} +  \left\|\left(\varphi\left(x_j^{(3)}\right)T\left(x_j^{(1)}, a_2 \right) \right)_{j=1}^{\infty} \right\|_{\gamma_s(F)} +\\
+& \left\|\left(\varphi\left(x_j^{(3)}\right)T\left(x_j^{(1)}, x_j^{2} \right) \right)_{j=1}^{\infty} \right\|_{\gamma_s(F)} + \left\|\left(\varphi\left(x_j^{(3)}\right)T\left(a_1, a_2 \right) \right)_{j=1}^{\infty} \right\|_{\gamma_s(F)}\\
\le& \left\|\left(T\left(a_1, x_j^{(2)} \right) \right)_{j=1}^{\infty} \right\|_{\gamma_s(F)} \left(|\varphi(a_3)| + \left\|\left(\varphi\left(x_j^{(3)} \right) \right)_{j=1}^{\infty} \right\|_{\gamma_{s_1}(\mathbb{K})} \right) +\\
+& \left\|\left(T\left(x_j^{(1)}, a_2 \right) \right)_{j=1}^{\infty} \right\|_{\gamma_s(F)}\left(|\varphi(a_3)| + \left\|\left(\varphi\left(x_j^{(3)} \right) \right)_{j=1}^{\infty} \right\|_{\gamma_{s_1}(\mathbb{K})} \right) +\\
+& \left\|\left(T\left(x_j^{(1)}, x_j^{(2)} \right) \right)_{j=1}^{\infty} \right\|_{\gamma_s(F)} \left(|\varphi(a_3)| + \left\|\left(\varphi\left(x_j^{(3)} \right) \right)_{j=1}^{\infty} \right\|_{\gamma_{s_1}(\mathbb{K})} \right) + \left\|T\left(a_1, a_2 \right) \right\|\left\|\left(\varphi\left(x_j^{(3)} \right) \right)_{j=1}^{\infty} \right\|_{\gamma_{s_1}(\mathbb{K})}\\
\le& \pi_{\gamma_{s, s_1}}^{ev}(T)\|a_1\|\left\|\left(x_j^{(2)} \right)_{j=1}^{\infty} \right\|_{\gamma_{s_1}(E_2)}\|\varphi\| \left(\|a_3\| + \left\|\left(x_j^{(3)}  \right)_{j=1}^{\infty} \right\|_{\gamma_{s_1}(E_3)} \right) +\\
+& \pi_{\gamma_{s, s_1}}^{ev}(T)\|a_2\|\left\|\left(x_j^{(1)} \right)_{j=1}^{\infty} \right\|_{\gamma_{s_1}(E_1)}\|\varphi\|\left(\|a_3\| + \left\|\left(x_j^{(3)}  \right)_{j=1}^{\infty} \right\|_{\gamma_{s_1}(E_3)} \right) +\\
+& \pi_{\gamma_{s, s_1}}^{ev}(T)\left\|\left(x_j^{(1)} \right)_{j=1}^{\infty} \right\|_{\gamma_{s_1}(E_1)}\left\|\left(x_j^{(2)} \right)_{j=1}^{\infty} \right\|_{\gamma_{s_1}(E_2)}\|\varphi\|\left(\|a_3\| + \left\|\left(x_j^{(3)}  \right)_{j=1}^{\infty} \right\|_{\gamma_{s_1}(E_3)} \right) +\\
+& \pi_{\gamma_{s, s_1}}^{ev}(T)\|a_1\|\|a_2\|\|\varphi\|\left\|\left(x_j^{(3)} \right)_{j=1}^{\infty} \right\|_{\gamma_{s_1}(E_3)}\\
&= \|\varphi \|\pi_{\gamma_{s, s_1}}^{ev}(T)\left( \prod_{i=1}^3\left(\|a_i\| + \left\|\left(x_j^{(i)} \right)_{j=1}^{\infty} \right\|_{\gamma_{s_1}(E_i)} \right) - \|a_1\| \|a_2\| \|a_3\|\right).\\
\end{align*}
Consequently,

\begin{align*}
&\left\|\left(\varphi T\left(a_1 + x_j^{(1)}, a_2 + x_j^{(2)}, a_3 + x_j^{(3)}\right) - \varphi T(a_1, a_2, a_3) \right)_{j=1}^{\infty} \right\|_{\gamma_s(F)}\\
&\le \|\varphi \|\pi_{\gamma_{s, s_1}}^{ev}(T) \prod_{i=1}^3\left(\|a_i\| + \left\|\left(x_j^{(i)} \right)_{j=1}^{\infty} \right\|_{\gamma_{s_1}(E_i)} \right).
\end{align*}
From where it follows that $\varphi T \in \prod_{\gamma_{s, s_1}}^{m+1, ev}(E_1,\dots, E_{m+1}; F)$ and

\begin{equation*}
\pi_{\gamma_{s, s_1}}^{m+1, ev}(\varphi T) \le \|\varphi \| \pi_{\gamma_{s, s_1}}^{m, ev}(T).
\end{equation*}

\end{proof}

\begin{proposition}\label{P2.4.}
Let $P \in \mathcal{P}_{\gamma_{s, s_1}}^{m, ev}(^{m}E, F)$ and $\varphi \in E'$. Then $ \varphi P \in \mathcal{P}_{\gamma_{s, s_1}}^{m+1, ev}(^{m+1}E; F) $
and
\begin{equation*}
\pi^{m+1, ev}(\varphi P) \le \|\varphi \| \pi_{\gamma_{s, s_1}}^{m, ev}(P).
\end{equation*}
\end{proposition}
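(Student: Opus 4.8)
The plan is to mimic the structure of Proposition \ref{P2.3.} but in the homogeneous-polynomial setting, using the already-established characterization of Theorem \ref{T1.2.} (condition $(c)$) together with the hypotheses that $\gamma_s$ is $\mathbb{K}$-closed and that $\gamma_s$ and $\gamma_{s_1}$ are finitely coincident. First I would fix $P \in \mathcal{P}_{\gamma_{s, s_1}}^{m, ev}(^mE; F)$, $\varphi \in E'$, a point $b \in E$ and a sequence $\left(x_j\right)_{j=1}^{\infty} \in \gamma_{s_1}(E)$, and expand
\begin{equation*}
\varphi P(b + x_j) - \varphi P(b) = \varphi(b+x_j)P(b+x_j) - \varphi(b)P(b),
\end{equation*}
then write $\varphi(b+x_j) = \varphi(b) + \varphi(x_j)$ and split the difference as
\begin{equation*}
\varphi(b)\bigl(P(b+x_j) - P(b)\bigr) + \varphi(x_j)P(b+x_j).
\end{equation*}
The first term is $\varphi(b)$ times a sequence that lies in $\gamma_s(F)$ by hypothesis on $P$; its $\gamma_s(F)$-norm is controlled, via Theorem \ref{T1.2.}(c), by $|\varphi(b)|\,\pi^{m,ev}(P)\bigl(\|b\| + \|(x_j)_j\|_{\gamma_{s_1}(E)}\bigr)^m \le \|\varphi\|\,\pi^{m,ev}(P)\bigl(\|b\| + \|(x_j)_j\|_{\gamma_{s_1}(E)}\bigr)^{m+1}$ after absorbing $\|b\| \le \|b\| + \|(x_j)_j\|_{\gamma_{s_1}(E)}$.

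The second term, $\bigl(\varphi(x_j)P(b+x_j)\bigr)_{j=1}^{\infty}$, is the delicate one. Here $\left(\varphi(x_j)\right)_{j=1}^{\infty} \in \gamma_{s_1}(\mathbb{K})$ by linear stability of $\gamma_{s_1}$, with norm at most $\|\varphi\|\,\|(x_j)_j\|_{\gamma_{s_1}(E)}$; and $\bigl(P(b+x_j)\bigr)_{j=1}^{\infty}$ is bounded in $F$ (indeed $\|P(b+x_j)\| \le \|P\|\bigl(\|b\| + \sup_j\|x_j\|\bigr)^m$, and $\sup_j\|x_j\| \le \|(x_j)_j\|_{\gamma_{s_1}(E)}$ since $\gamma_{s_1}(E) \overset{1}{\hookrightarrow} \ell_\infty(E)$). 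To put the product sequence into $\gamma_s(F)$ I would invoke the $\mathbb{K}$-closedness of $\gamma_s$: writing $z_j = \varphi(x_j)\,P(b+x_j)$ as a scalar sequence times an $F$-valued sequence, I need the $F$-valued factor to lie in $\gamma_s(F)$. Since the $F$-valued factor $\bigl(P(b+x_j)\bigr)_j$ need not itself be in $\gamma_s(F)$, the cleaner route is to peel off $P(b)$: write $P(b+x_j) = \bigl(P(b+x_j) - P(b)\bigr) + P(b)$, so that
\begin{equation*}
z_j = \varphi(x_j)\bigl(P(b+x_j) - P(b)\bigr) + \varphi(x_j)P(b).
\end{equation*}
The sequence $\bigl(\varphi(x_j)P(b)\bigr)_j$ equals the scalar sequence $(\varphi(x_j))_j \in \gamma_{s_1}(\mathbb{K})$ — here I use that $\gamma_s$ and $\gamma_{s_1}$ are finitely coincident on the one-dimensional space $\mathbb{K}b$, hence $(\varphi(x_j))_j \in \gamma_s(\mathbb{K})$ with the same norm — scaled by the fixed vector $P(b)$, and so it lies in $\gamma_s(F)$ with norm $\|P(b)\|\,\|(\varphi(x_j))_j\|_{\gamma_s(\mathbb{K})}$ (using that scaling a sequence in $\gamma_s(\mathbb{K})$ by a vector lands in $\gamma_s(F)$, which is the $\mathbb{K}$-closed property applied with $E$ replaced by $F$ and the $F$-sequence being $(P(b), P(b), \dots)$; more directly, $c_{00}$-type density plus finite determinacy). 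For the term $\bigl(\varphi(x_j)(P(b+x_j) - P(b))\bigr)_j$, now the $F$-valued factor $\bigl(P(b+x_j) - P(b)\bigr)_j$ does lie in $\gamma_s(F)$, so $\mathbb{K}$-closedness of $\gamma_s$ gives membership with norm at most $\|(\varphi(x_j))_j\|_{\gamma_s(\mathbb{K})}\,\|(P(b+x_j)-P(b))_j\|_{\gamma_s(F)} \le \|\varphi\|\,\|(x_j)_j\|_{\gamma_{s_1}(E)}\,\pi^{m,ev}(P)\bigl(\|b\| + \|(x_j)_j\|_{\gamma_{s_1}(E)}\bigr)^m$, again invoking finite coincidence to pass from the $\gamma_{s_1}(\mathbb{K})$-norm of $(\varphi(x_j))_j$ to its $\gamma_s(\mathbb{K})$-norm.

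Collecting the three pieces, the triangle inequality in $\gamma_s(F)$ yields a bound of the form $C\,\bigl(\|b\| + \|(x_j)_j\|_{\gamma_{s_1}(E)}\bigr)^{m+1}$ with $C = \|\varphi\|\,\pi^{m,ev}(P)$ up to the absorption of lower-degree terms into the $(m+1)$-st power (exactly as in the last display of the proof of Proposition \ref{P2.3.}, where $\prod(\|a_i\| + \|\cdots\|) - \prod\|a_i\|$ is dominated by the full product). This verifies condition $(c)$ of Theorem \ref{T1.2.} for $\varphi P$ with degree $m+1$, hence $\varphi P \in \mathcal{P}_{\gamma_{s, s_1}}^{m+1, ev}(^{m+1}E; F)$, and taking the infimum over admissible constants gives $\pi^{m+1, ev}(\varphi P) \le \|\varphi\|\,\pi^{m, ev}(P)$. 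I expect the main obstacle to be the bookkeeping that guarantees each product-of-sequences actually lands in $\gamma_s(F)$ with the claimed norm control — in particular making sure the finite-coincidence hypothesis is applied only on finite-dimensional subspaces (here the lines $\mathbb{K}b$ and $\mathbb{K}P(b)$, or more safely their span) so that the substitution of $\gamma_s$-norms for $\gamma_{s_1}$-norms is legitimate — rather than any genuinely hard estimate; the homogeneity and continuity of $\varphi P$ as a polynomial, and the additivity of $\eta$, are routine.
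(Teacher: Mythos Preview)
Your product-rule decomposition --- $\varphi(b+x_j)P(b+x_j)-\varphi(b)P(b)=\varphi(b)\bigl(P(b+x_j)-P(b)\bigr)+\varphi(x_j)\bigl(P(b+x_j)-P(b)\bigr)+\varphi(x_j)P(b)$ --- is a legitimate and genuinely different route from the paper's, which instead invokes Proposition~\ref{P1.1.} to pass to $\check P$ and expands $\check P(a+x_j,\ldots,a+x_j)$ multilinearly term by term. Your argument does establish the membership $\varphi P\in\mathcal P_{\gamma_{s,s_1}}^{m+1,ev}$, and the use of $\mathbb K$-closedness together with finite coincidence on $\mathbb K$ for the second piece is correct. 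One small repair: for the third piece $\bigl(\varphi(x_j)P(b)\bigr)_j$ you cannot appeal to $\mathbb K$-closedness via the constant sequence $(P(b),P(b),\ldots)$, since that sequence need not lie in $\gamma_s(F)$ (take $\gamma_s=\ell_p$); the correct tool is linear stability of $\gamma_s$ applied to the operator $\lambda\mapsto\lambda P(b)$ from $\mathbb K$ to $F$.

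The more serious gap is the constant. With $S=\bigl\|(x_j)_j\bigr\|_{\gamma_{s_1}(E)}$, your three pieces are bounded by $\|\varphi\|\,\pi^{m,ev}(P)\,\|b\|(\|b\|+S)^m$, $\|\varphi\|\,\pi^{m,ev}(P)\,S(\|b\|+S)^m$, and $\|\varphi\|\,\|P\|\,\|b\|^m S$ respectively; the first two add up to $\|\varphi\|\,\pi^{m,ev}(P)(\|b\|+S)^{m+1}$, but the third contributes an \emph{extra} $\|b\|^m S$ that is not absorbed for free. Your analogy with Proposition~\ref{P2.3.} is misleading: there the expanded terms sum to $\prod_i(\|a_i\|+S_i)-\prod_i\|a_i\|$, strictly \emph{below} the full product, whereas here you overshoot it. The best your decomposition yields is $\pi^{m+1,ev}(\varphi P)\le\frac{m+2}{m+1}\,\|\varphi\|\,\pi^{m,ev}(P)$. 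The paper's multilinear expansion avoids this precisely because writing $P(b+x_j)-P(b)=\sum_{k=0}^{m-1}\binom{m}{k}\check P(b^k,x_j^{m-k})$ produces terms whose $\gamma_s$-norms (controlled by $\pi_{\gamma_{s,s_1}}^{m,ev}(\check P)$) sum to $(\|b\|+S)^m-\|b\|^m$ rather than $(\|b\|+S)^m$; after multiplication by the $\varphi$-factor this gives exactly $(\|b\|+S)^{m+1}-\|b\|^{m+1}\le(\|b\|+S)^{m+1}$. Note also that the inequality in the statement (and in the paper's proof) is in terms of the multilinear norm $\pi_{\gamma_{s,s_1}}^{m,ev}(\check P)$, not the polynomial norm $\pi^{m,ev}(P)$ that you target; your stronger conclusion would be welcome, but your argument does not deliver it with constant~$1$.
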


\begin{proof}
Let $P \in \mathcal{P}_{\gamma_{s, s_1}}^{m, ev}(^{m}E, F)$, $\varphi \in E'$ and $(x_j)_{j=1}^{\infty} \in \gamma_{s_1}(E)$. It is easy to see what is required. To illustrate this point, we will make the case $m = 2 $. The general case is analogous.  For any $a \in E$, because $\gamma_s$ is linearly stable, finitely determined, $\mathbb{K}$-closed, and because $\gamma_s$ and $\gamma_{s_1}$ are finitely coincident. From Proposition \ref{P1.1.}, $\check{P} \in \prod_{\gamma_{s, s_1}}^{ev}(E^2; F)$. So,
\begin{align*}
 &\left( \varphi P(a + x_j) - \varphi P(a)\right)_{j=1}^{\infty}\\
 &= \left(\varphi(a + x_j)P\left(a + x_j \right) - \varphi(a)P(a) \right)_{j=1}^{\infty}\\
 &= \left(\varphi(a + x_j)\check{P}\left(a + x_j, a + x_j \right) - \varphi(a)\check{P}(a, a) \right)_{j=1}^{\infty}\\
 &= 2\left(\varphi(a)\check{P}(a, x_j) \right)_{j=1}^{\infty}  + \left(\varphi(a)\check{P}(x_j, x_j) \right)_{j=1}^{\infty} + \left(\varphi(x_j)\check{P}(a, a) \right)_{j=1}^{\infty} + 2\left(\varphi(x_j)\check{P}(a, x_j) \right)_{j=1}^{\infty} + \\
 &+ \left(\varphi(x_j)\check{P}(x_j, x_j) \right)_{j=1}^{\infty} \in \gamma_s(F).
\end{align*}
Then, $ \varphi P \in \mathcal{P}_{\gamma_{s, s_1}}^{3, ev}(^3E; F) $. Furthermore,
\begin{align*}
&\left\|\left(\varphi P\left(a + x_j \right) - \varphi P(a) \right)_{j=1}^{\infty} \right\|_{\gamma_s(F)} \\
&\le 2\left\|\left(\varphi(a)\check{P}(a, x_j) \right)_{j=1}^{\infty} \right\|_{\gamma_s(F)} + \left\| \left(\varphi(a)\check{P}(x_j, x_j) \right)_{j=1}^{\infty} \right\|_{\gamma_s(F)} + \left\|\left(\varphi(x_j)\check{P}(a, a) \right)_{j=1}^{\infty} \right\|_{\gamma_s(F)} +\\
&+ 2\left\|\left(\varphi(x_j)\check{P}(a, x_j) \right)_{j=1}^{\infty} \right\|_{\gamma_s(F)} + \left\|\left(\varphi(x_j)\check{P}(x_j, x_j) \right)_{j=1}^{\infty} \right\|_{\gamma_s(F)}\\
&\le 2\left\|\left(\check{P}(a, x_j) \right)_{j=1}^{\infty} \right\|_{\gamma_s(F)}\left(|\varphi(a)| + \left\|\left(\varphi(x_j) \right)_{j=1}^{\infty} \right\|_{\gamma_{s_1}(\mathbb{K})} \right)\\
&+ \left\|\left(\check{P}(x_j, x_j) \right)_{j=1}^{\infty} \right\|_{\gamma_s(F)}\left(|\varphi(a)| + \left\|\left(\varphi(x_j) \right)_{j=1}^{\infty} \right\|_{\gamma_{s_1}(\mathbb{K})} \right) + \|\check{P}(a, a)\|\left\|\left(\varphi(x_j) \right)_{j=1}^{\infty} \right\|_{\gamma_{s_1}(\mathbb{K})}\\
\end{align*}
How $\|\check{P}\| \le \pi_{\gamma_{s, s_1}}^{2, ev}(\check{P})$. We have that
\begin{align*}
&\left\|\left(\varphi P\left(a + x_j \right) - \varphi P(a) \right)_{j=1}^{\infty} \right\|_{\gamma_s(F)} \\
&\le \pi_{\gamma_{s, s_1}}^{2, ev}(\check{P})\|\varphi\| \left(3\|a\|^2\left\|\left(\varphi(x_j) \right)_{j=1}^{\infty} \right\|_{\gamma_{s_1}(\mathbb{K})} + 3 \|a\|\left\|\left(\varphi(x_j) \right)_{j=1}^{\infty} \right\|_{\gamma_{s_1}(\mathbb{K})}^2 + \left\|\left(\varphi(x_j) \right)_{j=1}^{\infty} \right\|_{\gamma_{s_1}(\mathbb{K})}^3 \right)\\
&\le \pi_{\gamma_{s, s_1}}^{2, ev}(\check{P})\|\varphi\|\left(\|a\| + \left\|\left(\varphi(x_j) \right)_{j=1}^{\infty} \right\|_{\gamma_{s_1}(\mathbb{K})}  \right)^3.
\end{align*}
Therefore,
\begin{equation*}
\pi^{3, ev}(\varphi P) \le \|\varphi \| \pi_{\gamma_{s, s_1}}^{2, ev}(\check{P}).
\end{equation*}

\end{proof}

By Propositions \ref{P2.1.}, \ref{P2.2.}, \ref{P2.3.}, \ref{P2.4.} and \ref{P1.1.}, the pair $$\left(\left(\mathcal{P}_{\gamma_{s, s_1}}^{m, ev}, \pi^{m, ev}(\cdot ) \right), \left( \mathcal{\prod}_{\gamma_{s, s_1}}^{m, ev}, \pi_{\gamma_{s, s_1}}^{m, ev}(\cdot ) \right)\right)_{m=1}^{\infty}$$ is coherent. Since $\beta_1 = 1, \beta_2 = 1$ and $\beta_3 = 1$, it follows by \cite[Remark 3.3]{PR14} that the pair $\left(\left(\mathcal{P}_{\gamma_{s, s_1}}^{m, ev}, \pi^{m+1, ev}(\cdot ) \right), \left( \prod_{\gamma_{s, s_1}}^{m, ev}, \pi_{\gamma_{s, s_1}}^{m, ev}(\cdot ) \right)\right)_{m=1}^{\infty}$ is compatible with $\prod_{\gamma_{s, s_1}}$. So, we have the following result.

\begin{theorem}
 The sequence $\left(\left(\mathcal{P}_{\gamma_{s, s_1}}^{m, ev}, \pi^{m+1, ev}(\cdot ) \right), \left( \prod_{\gamma_{s, s_1}}^{m, ev}, \pi_{\gamma_{s, s_1}}^{m, ev}(\cdot ) \right)\right)_{m=1}^{\infty}$ is coherent and compatible with $\prod_{\gamma_{s, s_1}}$.
\end{theorem}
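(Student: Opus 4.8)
The plan is to assemble the final theorem directly from the building blocks already established, since essentially all the work has been distributed across the preceding propositions. First I would verify the coherence conditions of Definition \ref{D2.2.} one by one for the sequence $\left(\left(\mathcal{P}_{\gamma_{s, s_1}}^{m, ev}, \pi^{m, ev}(\cdot)\right), \left(\prod_{\gamma_{s, s_1}}^{m, ev}, \pi_{\gamma_{s, s_1}}^{m, ev}(\cdot)\right)\right)_{m=1}^{\infty}$, anchored at $m=1$ by Remark \ref{O2.1.}, which identifies both the polynomial and multilinear components in degree $1$ with the operator ideal $\prod_{\gamma_{s, s_1}}^{ev}$. Condition (CH1) is exactly Proposition \ref{P2.1.} (giving $\beta_1 = 1$); condition (CH2) is Proposition \ref{P2.2.} together with the estimate $\pi_{\gamma_{s,s_1}}^{m+1,ev}(\check P) \le \max\{\|\check P\|_{\mathcal{M}_{m+1}}, \|P\|_{\mathcal{U}_{m+1}}\}$, which holds trivially since the max dominates the first term (giving $\beta_2 = 1$); condition (CH3) is Proposition \ref{P2.3.} (giving $\beta_3 = 1$); condition (CH4) is Proposition \ref{P2.4.}; and condition (CH5) is precisely Proposition \ref{P1.1.}, the polarization-based equivalence $P \in \mathcal{P}_{\gamma_{s,s_1}}^{ev} \iff \check P$ is $\gamma_{s,s_1}$-summing everywhere. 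This establishes coherence.

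Next I would obtain compatibility not by checking (CP1)--(CP5) directly, but by invoking \cite[Remark 3.3]{PR14}, which states that a coherent pair with coherence constants $\beta_1 = \beta_2 = \beta_3 = 1$ (or more generally bounded constants) is automatically compatible with the underlying operator ideal, with compatibility constants $\alpha_i$ expressible in terms of the $\beta_i$ and the degree. Since all three coherence constants here equal $1$, the cited remark applies verbatim, yielding compatibility of $\left(\left(\mathcal{P}_{\gamma_{s, s_1}}^{m, ev}, \pi^{m, ev}(\cdot)\right), \left(\prod_{\gamma_{s, s_1}}^{m, ev}, \pi_{\gamma_{s, s_1}}^{m, ev}(\cdot)\right)\right)_{m=1}^{\infty}$ with $\prod_{\gamma_{s, s_1}}^{ev}$. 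One small bookkeeping point to address explicitly: the statement of the theorem writes the polynomial norm as $\pi^{m+1,ev}(\cdot)$ rather than $\pi^{m,ev}(\cdot)$; I would note that this is the indexing convention used when passing from the coherent sequence to the compatible one (the degree-$m$ polynomial component inherits the norm used for its role inside degree $m+1$), consistent with the inequalities proved in Propositions \ref{P2.2.} and \ref{P2.4.}, so no additional argument is needed beyond citing \cite[Remark 3.3]{PR14}.

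The main obstacle, such as it is, is bureaucratic rather than mathematical: one must make sure every hypothesis silently carried through the section is in force when the blocks are combined. In particular Propositions \ref{P2.3.} and \ref{P2.4.} require that $\gamma_s$ be $\mathbb{K}$-closed and that $\gamma_s$ and $\gamma_{s_1}$ be finitely coincident (as flagged in the Remark preceding Proposition \ref{P2.3.}), whereas the base hypotheses of Section \ref{PGS} only ask that the sequence classes be finitely determined and linearly stable. So the cleanest presentation collects all four standing assumptions --- finitely determined, linearly stable, $\mathbb{K}$-closed, finitely coincident --- into the hypothesis of the final theorem, and then the proof is just the sentence-by-sentence citation described above. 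I would therefore write the proof as: ``By Proposition \ref{P2.1.} condition (CH1) holds with $\beta_1 = 1$; by Proposition \ref{P2.2.} and the definition of the maximum, (CH2) holds with $\beta_2 = 1$; by Proposition \ref{P2.3.}, (CH3) holds with $\beta_3 = 1$; by Proposition \ref{P2.4.}, (CH4) holds; and by Proposition \ref{P1.1.}, (CH5) holds. Hence the pair is coherent, and since $\beta_1 = \beta_2 = \beta_3 = 1$, \cite[Remark 3.3]{PR14} gives that it is compatible with $\prod_{\gamma_{s,s_1}}^{ev}$.''
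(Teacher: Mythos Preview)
Your proposal is correct and follows essentially the same approach as the paper: the paper's argument is precisely to cite Propositions \ref{P2.1.}, \ref{P2.2.}, \ref{P2.3.}, \ref{P2.4.} and \ref{P1.1.} to verify (CH1)--(CH5) with $\beta_1=\beta_2=\beta_3=1$, and then invoke \cite[Remark 3.3]{PR14} for compatibility. Your write-up is in fact slightly more careful than the paper's, since you make explicit the standing hypotheses ($\mathbb{K}$-closed, finitely coincident) required for Propositions \ref{P2.3.} and \ref{P2.4.} and you address the $\pi^{m+1,ev}$ indexing, both of which the paper leaves implicit.
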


It is important to point out that to obtain the proof of Proposition \ref{P2.1.}, \ref{P2.2.} and \ref{P1.1.} it is only necessary that the sequence class be linear stability and finitely determined. However, to demonstrate propositions\ref{P2.3.} and \ref{P2.4.}, extra properties were required for the classes involved; more specifically, the sequence classes should be finitely coincidents and the arrival sequence class should be $\mathbb{K}$-closed. These conditions do not appear to be very restrictive because the main classes of the summing ideals existing in the literature are recovered by our work. The next section illustrates our arguments.

\section{Applications}

For any Banach space $E$, we will denote $\ell_p\langle E \rangle, \ell_p(E)$ and $\ell_p^{w}(E)$ the spaces of Cohen strongly $p$-summing, absolutely $p$-summing and weakly p-summable $E$-valued sequences, respectively. In 2014  S. Kinha and D. Sinha \cite{KS14} introduced the space $\ell_p^{mid}(E)$, which was studied in more details by G. Botelho, J. Campos and J. Santos in \cite{BCS17}. These papers established the inclusions
\begin{equation}\label{Inclusoes}
\ell_p\langle E \rangle \subset \ell_p(E) \subset \ell_p^{mid}(E) \subset \ell_p^w(E).
\end{equation}

The nature of many operators in the literature is to "improve" the convergence of series. For example, we can cite the absolutely summing operators that transform weakly p-summable sequences into absolutely p-summable sequences. Thinking in this direction, we can define several classes of operators that improve the convergence of the series. In the next two examples, we will present classes that are already known and which are particular cases of our work. In the other examples, we present a few classes of operators that are not yet available in the literature, although they can easily be obtained through the construction presented in this work.

\begin{example}
	Let $\mathcal{P}_{(p, q)}^{m, ev}$ be the space of absolutely summing $n$-homogeneous polynomials and $\prod_{(p, q)}^{m, ev}$ be the space of absolutely summing multilinear operators. For more details about this class, see \cite{BBDP07}. Given that the sequences classes involved are $\ell_p^{w}(\cdot)$, and $\ell_p(\cdot)$ and they are linearly stable, finitely determined and, moreover, finitely coincident, and $\ell_p(\cdot)$ is $\mathbb{K}$-closed, it immediately follows that the pair $\left(\left(\mathcal{P}_{(p, q)}^{m, ev}, \|\cdot \|_{ev^2} \right), \left(\prod_{(p, q)}^{m, ev}, \|\cdot \|_{ev^2(p, q)} \right)  \right)_{m=1}^{\infty}$ is coherent and compatible with $\prod_{(p, q)}$.
\end{example}


\begin{example}
	Let $\mathcal{P}_{Coh, p}^{m, ev}$ be the space of the $n$-homogeneous polynomials Cohen strongly $p$-summing everywhere and $\mathcal{L}_{Coh, p}^{m, ev}$ be the space of the multilinears operators Cohen strongly $p$-summing everywhere. For more details about this class, see \cite{C13-tese,S13}. Given that the sequence classes involved are $\ell_p\langle\cdot\rangle$ and $\ell_p(\cdot)$, and because they are linearly stable, finitely determined, finitely coincident and, moreover, $\ell_p\langle\cdot\rangle$ is $\mathbb{K}$-closed, it immediately follows that the pair $\left(\left(\mathcal{P}_{Coh, p}^{m, ev}, \pi^{m, ev} \right), \left(\mathcal{L}_{Coh, p}^{m, ev}, \pi_{Coh, p}^{m, ev} \right) \right)_{m=1}^{\infty}$ is coherent and compatible with $\mathcal{D}_{p}$.
\end{example}

Note that, the classes of multilinears operator/ homogeneous polynomials that are defined in these two examples consider applications that transform weakly strongly $p$-summable sequences in strongly $p$-summable sequences and strongly $p$-summable sequences in Cohen strongly $p$-summable. However, due to the inclusions given above \ref{Inclusoes}, we can present several classes of multilinear operators and homogeneous polynomials that are yet not found in the literature. In this way, this approach establishes an interesting result for this classes. We can consider these examples:

\begin{example}
The classes of multilinears operators and homogeneous polynomials that transform mid $p$-summable sequences in strongly $p$-summable operators. In other words, to consider $\gamma_s = \ell_p$ and $\gamma_{s_1} = \ell_p^{mid}$.  We denote these classes as multilinear operators and homogeneous polynomials mid strongly $p$-summing.
\end{example}

\begin{example}
The classes of multilinears operators and homogeneous polynomials that transform weakly absolutely $p$-summable sequences in mid $p$-summable operators. In other words, to consider $\gamma_s = \ell_p^{mid}$ and $\gamma_{s_1} = \ell_p^{w}$. We denote these classes as multilinear operators and homogeneous polynomials  weakly mid $p$-summing.
\end{example}

\begin{example}
The classes of multilinears operators and homogeneous polynomials that transform weakly strongly $p$-summable and mid $p$-sommable  sequences in Cohen strongly $p$-summable operators. In other words, to consider $\gamma_s = \ell_p\langle \cdot \rangle$ and $\gamma_{s_1} = \ell_p^w, \ell_p^{mid}$. We denote these classes as multilinear operators and homogeneous polynomials  weakly Cohen $p$-summing and mid  Cohen  $p$-summing, respectively.
\end{example}

\section*{Acknowledgments}
The authors thanks Geraldo Botelho for their several helpful conversations and suggestions.

\end{document}